\documentclass[sn-mathphys-num]{sn-jnl}% Math and Physical Sciences Numbered Reference Style
\usepackage{lmodern}
\usepackage{nicefrac}
\usepackage{graphicx}%
\usepackage{multirow}%
\usepackage{amsmath,amssymb,amsfonts}%
\usepackage{amsthm}%
\usepackage[title]{appendix}%
\usepackage{xcolor}%
\usepackage{textcomp}%
\usepackage{manyfoot}%
\usepackage{booktabs}%
\usepackage{algorithm}%
\usepackage{algorithmicx}%
\usepackage{algpseudocode}%
\usepackage{listings}%
\usepackage{url}
\newtheorem{theorem}{Theorem}
\newtheorem{proposition}[theorem]{Proposition}
\newtheorem{remark}{Remark}%
\newtheorem{lemma}{Lemma}%
\newtheorem{corollary}{Corollary}%
\begin{document}

\title[Evaluation codes from linear systems of conics]{Evaluation codes from linear systems of conics}

%%=============================================================%%
%% GivenName	-> \fnm{Joergen W.}
%% Particle	-> \spfx{van der} -> surname prefix
%% FamilyName	-> \sur{Ploeg}
%% Suffix	-> \sfx{IV}
%% \author*[1,2]{\fnm{Joergen W.} \spfx{van der} \sur{Ploeg} 
%%  \sfx{IV}}\email{iauthor@gmail.com}
%%=============================================================%%

\author[1,2]{\fnm{Barbara} \sur{Gatti}}\email{barbara.gatti@unisalento.it}
\equalcont{These authors contributed equally to this work.}

\author[2]{\fnm{G\'abor} \sur{Korchm\'aros}}\email{gabor.korchmaros@unibas.it}
\equalcont{These authors contributed equally to this work.}

\author[1,2]{\fnm{Gioia} \sur{Schulte}}\email{gioia.schulte@unisalento.it}
\equalcont{These authors contributed equally to this work.}

\affil[1]{\orgdiv{Department of Mathematics and Physics ”Ennio de Giorgi”}, \orgname{University of Salento}, \orgaddress{\street{Via per Arnesano}, \city{Lecce}, \postcode{73100 }, \state{Italy}}}

\affil[2]{\orgdiv{Department of Mathematics, Computer Science and Economics}, \orgname{University of Basilicata}, \orgaddress{\street{Contrada Macchia Romana}, \city{Potenza}, \postcode{85100}, \state{Italy}}}

%%==================================%%
%% Sample for unstructured abstract %%
%%==================================%%

\abstract{The Datta-Johnsen code is an evaluation code where the linear combinations of elementary symmetric polynomials are evaluated on the set of  all points with pairwise distinct coordinates in an affine space of dimension $\ge 2$ over a finite field $\mathbb{F}_q$. A generalization is obtained by taking a low dimensional linear system of symmetric polynomials. The odd characteristic case was the subject of a recent paper. Here, the even characteristic case is investigated.
}

\keywords{evaluation code, symmetric polynomial, finite field}

\pacs[MSC Classification]{05E05, 94B05, 11G20}

%%\pacs[JEL Classification]{D8, H51}

%%\pacs[MSC Classification]{35A01, 65L10, 65L12, 65L20, 65L70}

\maketitle

\section{Introduction}\label{sec1}
Finite geometry is a major source of construction of linear codes, in particular of evaluation codes where the codewords are constructed from polynomials of $\mathbb{F}_q[X_1,\ldots,X_m]$ by evaluating them on some set of points of the $m$-dimensional affine space $AG(m,\mathbb{F}_q)$  over the finite field $\mathbb{F}_q$ of order $q$. Well known evaluation codes are the Reed-Solomon codes, Reed-Muller codes, monomial codes, Cartesian codes, and toric codes. The constructions of evaluation codes are quite diverse, the unique constraint being that the chosen polynomials must form a linear system, that is, a finite dimensional $\mathbb{F}_q$-subspace of $\mathbb{F}_q[X_1,\ldots,X_m]$.  This occurs, for instance, when all symmetric polynomials of bounded degree in $\mathbb{F}_q[X_1,\ldots,X_m]$ are taken. Datta and Johnsen \cite{datta2023codes} investigated the case where the linear system comprises all $\mathbb{F}_q$-linear combinations of elementary symmetric polynomials while the evaluation set consists of all distinguished points in $AG(m,\mathbb{F}_q)$, i.e. on all points with pairwise distinct coordinates. The \emph{Datta-Johnsen code} is a $$\Big[\binom{q}{m}m!,\,m+1,\,(q-m)\binom{q-1}{m-1}(m-1)!\Big]_q$$ linear code with relative minimum distance equal to that of the Reed-Muller code. To improve the relative dimension (or rate) of their code, Datta and Jonhsen introduced a modified version in the same paper, named later \emph{reduced Datta-Johnsen code}, that has the same relative minimum distance, but a better rate. Their idea was to keep the linear system of symmetric polynomials but to ``reduce'' the set of distinguished points of $AG(m,\mathbb{F}_q)$  taking the representatives of equivalence classes of distinguished points of $AG(m,\mathbb{F}_q)$, where two distinct distinguished points are equivalent when they have the same coordinates but in a different order. For $m < q$, the reduced Datta-Johnsen code $C_m'$ is a non-degenerate $$\Big[\binom{q}{m},\,m+1,\,\binom{q}{m}- \binom{q-1}{m-1}\Big]_q$$ linear code.
A variation of the Datta-Johnsen codes was introduced and investigated in \cite{Mi2} with an approach combining Galois theoretical methods with Weil-type bounds for hypersurfaces.

The reduced Datta-Johnsen code was generalized in \cite{GKNPS} for  any linear system $V$ consisting of symmetric polynomials in $\mathbb{F}_q[X_1,\ldots,X_m]$. The {\emph{reduced generalized Datta-Johnsen code}} is the evaluation code where the evaluating polynomials are those in $V$ and the evaluation set $\mathcal{Q}$ is the representatives of equivalence classes of distinguished points of $AG(m,\mathbb{F}_q)$. Let  $\Phi_m$ denote the map taking any polynomial $f\in\mathbb{F}_q[X_1,\ldots,X_m]$ to the polynomial $\Phi_m(f)=f(\sigma_m^1(x),\dots,\sigma_m^m(x))$  where $\sigma_m^i(x)$ denotes the $i$-th elementary symmetric polynomial. The fundamental theorem of symmetric polynomials states that
${\rm{Im}}(\Phi_m)$ consists of all symmetric polynomials in $\mathbb{F}_q[X_1,\ldots,X_m]$.
Since $\Phi_m$ is an $\mathbb{F}_q$-linear map, every $\mathbb{F}_q$-subspace of symmetric polynomials is the image of a (unique) $\mathbb{F}_q$-subspace of $\mathbb{F}_q[X_1,\ldots,X_m]$. Therefore,
 the linear system of all polynomials $\Sigma_{m,t}$ of degree $\le t$, as well as any of its linear subsystems $\Sigma_{m,t}(r)$ of dimension $r$  defines an $\mathbb{F}_q$-subspace of symmetric polynomials. Here, $\Sigma_{m,1}$ corresponds to the subspace generated by the elementary symmetric polynomials in $m$ indeterminates. Thus, the Datta-Johnsen code and its reduction correspond to the simplest case, i.e. $\Sigma_{m,1}$. In \cite{GKNPS} it was pointed out that $\mathcal{Q}$ can be identified by the set of the unramified points of the quotient variety $\mathbb{F}_q^m/{\rm{Sym}}_m$, so that the associated generalized reduced Datta-Johnsen code turns out to be equivalent to the evaluation code of the chosen linear (sub)system where the polynomials are evaluated on the set $\Delta$ of unramified points of the quotient variety $\mathbb{F}_q^m/{\rm{Sym}}_m$. In \cite{GKNPS}, an embedding of $\Delta$ in $AG(m,\mathbb{F}_q)$ was described allowing to interpret the codewords in terms of intersections of $\Delta$ with certain hypersurfaces. This embedding  has given a motivation to consider the connections between, on one side, the fundamental parameters of the generalized Datta-Johnsen codes, especially their weight distributions, and, on the other side, certain enumerative questions concerning intersections of relevant objects in Finite geometry and Algebraic geometry over finite fields. The study of the finite and algebraic geometry counterpart is itself of interest, and yet to be carried out for a general choice of $m$ and $t$.

 From now on, let $m=t=2$.

 The odd characteristic case was thoroughly worked out in \cite{GKNPS} where it was shown that $\Delta$ coincides with the set $\mathcal{E}$ of all external points to a given parabola in $AG(2,\mathbb{F}_q)$. Actually, this result was sufficient to compute the fundamental parameters of the generalized Datta-Johnsen code since the spectrum of the possible sizes of the pointsets cut out on $\mathcal{E}$ by conics had previously been determined; see \cite{AFKL}.
 An essential idea in \cite{AFKL} was to express those sizes in terms of the number of points of rational and elliptic curves over $\mathbb{F}_q$.

 The even characteristic case is the subject of this paper. Clearly, $\Delta$ cannot be the set of external points to a conic, as such points do not exist in $AG(2,\mathbb{F}_q)$ for $q$ even. Nevertheless, a kind of analogy with the odd characteristic case is possible. In fact, $\mathcal{E}$ can be regarded as the set covered by a certain family of parabolas. More precisely, if the parabola $\mathcal P$ whose external points form $\mathcal{E}$ is taken with the canonical equation $Y=X^2$, then the parabola $\mathcal{P}_c$ of equation $Y=X^2-c$ with $c\in \mathbb{F}_q$ entirely consists of external points to $\mathcal{P}$ if and only if $c$ is a non-zero square in $\mathbb{F}_q$, i.e. $c\in \square ^*$. Therefore, the parabolas $\mathcal{P}_c$ with $c$ ranging over $\square^*$ cover each point of $\Delta$ exactly one time. In Section \ref{dede} we show that if $q$ is even and $O=(0,0)$ is the origin, then $\Delta\cup \{O\}$ is covered by the parabolas $\mathcal{P}_a$ of equation $Y=aX^2$ with $a$ ranging over the elements of zero trace in $\mathbb{F}_q$ each point of $\Delta$ being covered exactly one time. In other words, if $q$ is even, then
 $$\Delta=\{P(x,ax^2)|x\in \mathbb{F}_q^*, \mathfrak{Tr}(a)=0\}.$$
 Unfortunately, this analogy is not close enough to obtain the spectrum for even $q$ by adapting the arguments used in \cite{AFKL}. Nevertheless, the possible sizes of the pointsets cut out on $\Delta$ by conics can be determined still relying on the number of points of rational and elliptic curves over $\mathbb{F}_q$. This is shown in Section \ref{dede} by a careful analysis of certain plane algebraic curves defined over $\mathbb{F}_q$. The main result in this direction is Corollary \ref{cor28112025} which states that
 if $C$ is a non-degenerate conic of $AG(2,\mathbb{F}_q)$ with equation
$$a_{11}X^2+a_{12}XY+a_{22}Y^2+a_{13}X+a_{23}Y+a_{33}=0,$$
then $$
|\Delta\cap C|\le \textstyle{\frac{1}{2}}(\sqrt{q}+1)^2-1.
$$
with one exception
 $$|\Delta\cap C|=q-1, a_{12}=a_{22}=0, \mbox{ and } a_{13}=a_{33}=0 \mbox{ or } a_{13}^2=a_{33}\ne 0.$$
For parabolas $C$ including the above exceptional case, Theorem \ref{pro19112025AA} provides the full spectrum of the sizes of $\Delta\cap C$.

Our results on the reduced generalized Datta-Johnsen are given in  Section \ref{ap}. The reduced generalized Datta-Johnsen code arising from the linear system of all conics is  $\left[\frac{1}{2} q(q-1),6,\frac{1}{2}q(q-3)\right]_q$.
In Section \ref{exa}, we work out two particular cases. The first one, see Construction 1, gives the following result.
\begin{theorem}
\label{main271224} For any power $q\ge 4$ of $2$, there exist reduced generalized Datta-Johnsen codes  $\left[\textstyle{\frac{1}{2}} q(q-1),3,d\right]_q$ whose minimum distance $d$ is at least $\textstyle{\frac{1}{2}}\left(q^2-2q-2\sqrt{q}+1\right)$. The weights of the non-zero codewords fall into
the interval
$$\left[\textstyle{\frac{1}{2}}(q-2\sqrt{q}-2),
 \textstyle{\frac{1}{2}}(q+2\sqrt{q}-1)\right].$$
\end{theorem}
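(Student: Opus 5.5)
The plan is to reduce all three claims to counting the points of $\Delta$ on the members of the linear system $V$ of Construction 1. The length is $|\Delta|=\frac12 q(q-1)$, the number of unordered pairs of distinct elements of $\mathbb{F}_q$. For a nonzero $f\in V$ with zero conic $C_f$, the weight of the codeword is $w(f)=|\Delta|-|\Delta\cap C_f|$. A direct computation shows
$$\tfrac12 q(q-1)-\tfrac12\left(q+2\sqrt q-1\right)=\tfrac12\left(q^2-2q-2\sqrt q+1\right),$$
so the bound on $d$ is exactly the statement that $|\Delta\cap C_f|\le \frac12(q+2\sqrt q-1)=\frac12(\sqrt q+1)^2-1$ for every nonzero $f\in V$. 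The interval in the statement I read as the admissible range of $|\Delta\cap C_f|$, i.e.\ of the number of zeros in $\Delta$ of a nonzero codeword. Once the upper end is established, the dimension is automatic: for $q\ge 4$ that end is smaller than $|\Delta|$, so no nonzero $f$ vanishes on $\Delta$, the evaluation map is injective, and $\dim=\dim V=3$.

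\emph{Step 1 (upper bound).} I will check, directly from the coefficients of a general member $f=\lambda_1f_1+\lambda_2f_2+\lambda_3f_3$ of Construction 1, that for $(\lambda_1,\lambda_2,\lambda_3)\neq(0,0,0)$ the conic $C_f$ is non-degenerate. I will also check that $C_f$ does not fall into the exceptional family of Corollary \ref{cor28112025}, namely $a_{12}=a_{22}=0$ together with $a_{13}=a_{33}=0$ or $a_{13}^2=a_{33}\neq0$.
\begin{itemize}
\item Non-degeneracy: since $q$ is even, I will use the characteristic-$2$ non-degeneracy criterion for conics and show that it reduces to a norm-type form in the $\lambda_i$ that has no nontrivial zero over $\mathbb{F}_q$. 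I expect the construction to be designed so that this happens, for instance via a trace-zero or irreducible-quadratic condition.
\item Non-exceptionality: members with $a_{12}=a_{22}=0$ are unavoidable in a $3$-dimensional system, since $(a_{12},a_{22})$ ranges over only a $2$-dimensional space. The point is to show that on this subfamily the linear and constant coefficients violate both $a_{13}=a_{33}=0$ and $a_{13}^2=a_{33}$.
\end{itemize}
Corollary \ref{cor28112025} then gives the upper end of the interval and the bound on $d$.

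\emph{Step 2 (lower bound).} For the lower end $\frac12(q-2\sqrt q-2)$ I will use the parametrization $\Delta=\{(x,ax^2): x\ne0,\ \mathfrak{Tr}(a)=0\}$ and write $a=b^2+b$. Each point of $\Delta$ then corresponds to exactly two pairs $(x,b)$ with $x\neq0$. Substituting into $f$ gives a plane curve $\mathcal{F}: f(x,x^2(b^2+b))=0$ of low degree, which I will analyse as in Section \ref{dede}. It is either rational or, after removing common factors and resolving its singularities, of genus at most $1$, so the Hasse--Weil bound gives at least $q+1-2\sqrt q$ rational points on a nonsingular model. From this I subtract:
\begin{itemize}
\item points at infinity;
\item points with $x=0$;
\item the discrepancy between the singular points and their branches on the nonsingular model.
\end{itemize}
Halving then yields $|\Delta\cap C_f|\ge \frac12(q-2\sqrt q-2)$. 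For the rational components the count is explicit and better.

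The main obstacle is Step 2, together with the degeneracy analysis feeding into it. Controlling the genus and singularities of $\mathcal{F}$ uniformly over all members of $V$, and bounding the number of discarded points by a constant small enough to land exactly at $\frac12(q-2\sqrt q-2)$, needs a case split. I would organise it by whether $a_{12}$, $a_{22}$ vanish, reusing the case analysis from the proof of Corollary \ref{cor28112025} and Theorem \ref{pro19112025AA} for the parabola members. I would reduce each case by an affine change of variables preserving the family of parabolas $Y=aX^2$ before applying Hasse--Weil.
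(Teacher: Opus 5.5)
Your reduction matches the paper's: weight $=|\Delta|-|\Delta\cap C_f|$, all members of the system non-degenerate, then Corollary \ref{cor28112025}. Your injectivity argument for the dimension is fine. Your anisotropic-discriminant idea is a coordinate version of the paper's argument: a reducible conic through the conjugate triangle $P,P^q,P^{q^2}$ contains exactly one side, and Frobenius fixes no side.

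\textbf{The gap: non-exceptionality.} This step cannot be ``checked from the coefficients'', because Construction 1 depends on the choice of $P$, and for some choices it is false. Let $C_0$ be any non-degenerate conic over $\mathbb{F}_q$ and $P\in C_0(\mathbb{F}_{q^3})\setminus PG(2,\mathbb{F}_q)$. Then $P$ lies on no line of $PG(2,\mathbb{F}_q)$, since such a line would meet $C_0$ in $P,P^q,P^{q^2}$. So $P$ is admissible, and $C_0$ belongs to the resulting net. Take $C_0\colon a_{11}X^2+YZ=0$.
\begin{itemize}
\item If $\mathfrak{Tr}(a_{11})=0$, the net contains the parabola $Y=a_{11}X^2$, which meets $\Delta$ in $q-1>\frac12(q+2\sqrt q-1)$ points for $q\ge 8$. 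So even the bound on $d$ fails.
\item If $\mathfrak{Tr}(a_{11})=1$, a case the paper's normalization does not remove, the net has a member with $|\Delta\cap C_0|=0$. This is below the lower end of the interval for $q\ge 8$.
\end{itemize}

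\textbf{What is missing.} First, the $\mathbb{F}_q$-net has \emph{exactly one} member with $a_{12}=a_{22}=0$. Two independent ones would combine to a member divisible by $Z$, which is reducible. Second, you need to use the freedom in the construction to control that single member.

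The paper normalizes $\mathfrak{Tr}(a_{11})=1$ through the coordinate change $X'=\alpha X$, in effect replacing the net by a projectively equivalent one. By Theorem \ref{pro19112025AA} that member then meets $\Delta$ in $\frac12 q$ or $0$ points. This suffices for $d$, so the paper never needs your non-exceptionality. It does not, however, secure the lower end of the interval.

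To get the full statement along your lines, choose $P$ on a prescribed non-exceptional parabola, e.g.\ $X^2+YZ+cZ^2=0$ with $c\neq 0$. It is then the unique parabola member, with $|\Delta\cap C|\in\{\frac12 q-1,\frac12 q\}$. All other members have $(a_{12},a_{22})\neq(0,0)$, so Corollary \ref{cor28112025} gives both ends of the interval.

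Consequently your Step 2 is superfluous. The lower bound $\frac12(q-2\sqrt q-2)$ is already part of Corollary \ref{cor28112025}, under exactly the hypotheses Step 1 is meant to establish. Moreover, no uniform Hasse--Weil lower bound over all non-degenerate members can hold before the exceptional parabolas are excluded.

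Finally, your proposal leaves the $3$-dimensional system itself as an expectation. Its existence is the core of the existence claim and has to be supplied: it is the net of conics through a Frobenius orbit of size $3$ of a point of $PG(2,\mathbb{F}_{q^3})$ lying on no line of $PG(2,\mathbb{F}_q)$.
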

For $q=8$, a Magma aided exhaustive search shows that either $d=21$, or $d=22$. This shows that the bound is sharp. Furthermore, the dual of some reduced generalized Datta-Johnsen code in Construction 1 is $[28,25,3]_8$ whose minimum distance is equal to the optimal value.

Construction 2 provides, for any power $q\ge 4$ of $2$, a reduced generalized Datta-Johnsen codes  $\left[\frac{1}{2} q(q-1),4,\frac{1}{2}q(q-3)\right]_q$  with weight distribution
$$\left\{\textstyle{\frac{1}{2}}q(q-3),\,\textstyle{\frac{1}{2}}(q^2-3q+2),\, \textstyle{\frac{1}{2}}q^2-q,\,\textstyle{\frac{1}{2}}q^2-q+1,\, \textstyle{\frac{1}{2}}q(q-1)\right\}.$$

\section{Background}
\subsection{Symmetric polynomials in two variables}
\label{smp}
Let $\mathbb{K}$ be any field.
A polynomial $F\left(X_1,X_2\right)\in \mathbb{K}\left[X_1,X_2\right]$ in the indeterminates $X_1,X_2$ with coefficients in $\mathbb{K}$ is \emph{symmetric} if $F\left(X_1,X_2\right)=F\left(X_2,X_1\right)$.
The \emph{elementary symmetric polynomials} are
$\sigma^1(X_1,X_2)=X_1+X_2$ and $\sigma^2(X_1,X_2)=X_1X_2$.

Let $f(Y)$ be any monic polynomial of degree $2$ in the unique indeterminate $Y$ with coefficients in $\mathbb{K}$. Let $y_1,y_2$ be the (not necessarily distinct) roots of $f(Y)$ in an algebraic closure of $\mathbb{K}$. Then
\begin{equation*}\label{monic}
    f(Y)=Y^2-\sigma_1(y_1,y_2)Y+\sigma_2(y_1,y_2)=Y^2-(y_1+y_2)+y_1y_2.
\end{equation*}

For a polynomial $F(X_1,X_2)\in \mathbb{K}[X_1,\ldots,X_m]$, substituting $X_i$ with the $i$-th symmetric polynomial provides a polynomial in $\mathbb{K}\left[X_1,X_2\right]$, namely
$$G\left(X_1,X_2\right)=F\left(X_1+X_2,X_1X_2\right)$$ which is symmetric. From the fundamental theorem on symmetric polynomials, every symmetric polynomial $G\in \mathbb{K}\left[X_1,X_2\right]$ arises in this way from a unique (not necessarily symmetric) polynomial $F\in \mathbb{K}\left[X_1,X_2\right]$. This defines  a vector space monomorphism $\Phi_2$ from $\mathbb{K}[X_1,X_2]$ onto its subspace $\mathbb{K}[X_1,X_2]^s$ comprising symmetric polynomials. Therefore, for any $\mathbb{K}$-subspace $\Sigma$ of $\mathbb{K}[X_1,X_2]$ (called linear system over $\mathbb{K}$), $\Phi_2(\Sigma)$ is a $\mathbb{K}$-subspace of symmetric polynomials, and the converse is true, as well.
\subsection{Plane algebraic curves over finite fields}
\label{av} For the theory of plane algebraic curves in positive characteristic, the reader is referred to \cite[Chapters 1-5]{HKT}.

From now on we assume $q\ge 4$ is a power of $2$.

Fix an algebraic closure $\mathbb{K}=\overline{\mathbb{F}}_q$, and let $AG(2, \mathbb{K})$ be the affine plane over $\mathbb{K}$ where an affine reference system $(X,Y)$ is fixed.  The point in $AG(2,\mathbb{K})$ with coordinates $x,y$ is denoted by $(x,y)$. A line of $AG(2,\mathbb{K})$ has equation either $Y=mX+b$, or $X=c$.
For a non-constant polynomial $F(X,Y)$ over $\mathbb{K}$, the (affine) plane curve $\mathcal F$ of equation $F(X,Y)=0$ is defined to be the set of zeros of $F(X,Y)$. If the coefficients of $F(X,Y)$ belong to $\mathbb{F}_q$, then $\mathcal{F}$ is a \emph{curve defined over} $\mathbb{F}_q$, and it is also called a curve of $PG(2,\mathbb{F}_q)$. To simplify notation, we sometimes identify $\mathcal F$ with its points in $AG(2,\mathbb{F}_q)$, and in such a case
\[\mathcal{F}=\{(x, y)\in AG(2,\mathbb{F}_q)\mid F(x, y) = 0\}.\]
The \textit{degree} of $\mathcal{F}$ is $\deg F$. A \textit{component} of $\mathcal F$ is any curve $\mathcal{G}$ of equation $G(X,Y)=0$ such that $G(X,Y)$ divides $F(X,Y)$. A curve $\mathcal F$ is \textit{irreducible} if $F$ is irreducible over $\mathbb{K}$; otherwise it is \textit{reducible} and splits into irreducible curves over $\mathbb{K}$, the components of $\mathcal{F}$.

A \emph{conic} is a plane curve of degree $2$. An irreducible conic of $AG(2,\mathbb{F}_q)$ is either a hyperbola, or a parabola, or an ellipse. A reducible conic (also called degenerate conic) of  $AG(2,\mathbb{F}_q)$ is either a line (counted twice), or two intersecting lines, or two parallel lines, or a line, or a single point; see \cite[Section 7.2]{HJWP}.  Accordingly, if the conic is defined over $\mathbb{F}_q$, its size equals $q-1,q,q+1,2q-1,2q,q,1$. An equation of a conic $C_2$ of $AG(2,\mathbb{F}_q)$ is
\begin{equation}
\label{eq21072025}
a_{11}X^2+a_{12}XY+a_{22}Y^2+a_{13}X+a_{23}Y+a_{33}=0
\end{equation}
with some non-vanishing coefficient and $C_2$ is degenerate if and only if
\begin{equation}
\label{conde}
a_{11}a_{23}^2+a_{12}a_{23}a_{13}+a_{22}a_{13}^2+a_{33}a_{12}^2=0.
\end{equation}
In particular, $C_2$ is degenerate when either $a_{11}=a_{12}=a_{22}=0$, or $a_{11}=a_{13}=a_{33}=0$, or $a_{11}=a_{22}=a_{23}=0$, or $a_{22}=a_{23}=a_{33}=0$. In the first case, $C_2$ coincides with the line of equation $a_{13}X+a_{23}Y+a_{33}=0$.
In  the second and forth cases $C_2$ splits into two lines both defined over $\mathbb{F}_q$. In the second case they have equations $y=0$ and $a_{12}X+a_{22}Y+a_{23}=0$ while in the forth case,  $X=0$ and $a_{11}X+a_{12}Y+a_{13}=0$. In the third case, $C_2$ has equation $a_{11}X^2+a_{12}X+a_{22}=0$ and $C_2$ splits into two lines which are defined either over $\mathbb{F}_q$, or over $\mathbb{F}_{q^2}$ according as $\mathfrak{Tr}(a_{11}a_{22}/a_{12}^2)$ is equal to $0$ or $1$;  apart from two exceptions, namely when $C_2$ coincides with a line where either  $a_{11}=0$, or $a_{12}=0$ and the line has equation $a_{12}X+a_{22}=0$, or $X+\sqrt{a_{22}/a_{11}}=0$.

A quadratic transformation $\omega$ of $AG(2,\mathbb{K})$ is a map $(X,Y)\rightarrow (U(X,Y),V(X,Y))$ where $U(X,Y),V(X,Y)$ are quadratic polynomials. Here, $\omega$ takes the point $(x,y)$ to the point $(U(x,y),V(x,y))$ and the curve $\mathcal{F}$ of equation $F(X,Y)$ to the curve $\mathcal{G}$ of equation $G(X,Y)=0$ such that  $G(U(X,Y),V(X,Y))=F(X,Y)$.

The projective closure $PG(2, \mathbb{K})$ of $AG(2,\mathbb{F}_q)$ is equipped with homogeneous coordinates $(X_1:X_2:X_3)$ with $X=X_1/X_3,Y=X_2/X_3$, so that $F$ is replaced by the corresponding homogeneous polynomial $F^*\in \mathbb{K}[X_1,X_2,X_3]$ of the same degree. The line of equation $X_3=0$ is the line at infinity.
The projective closure of the affine curve $\mathcal{F}$ is the projective plane curve consisting of all points $P$ whose coordinates satisfy the equation $F^*=0$, i.e.
\[\{P=(x_1:x_2:x_3) \in PG(2,\mathbb{K})\mid F^*(x_1,x_2,x_0)= 0\}.\]
To simplify notation, $\mathcal{F}$ will also denote the projective closure of $\mathcal{F}$.

A \emph{cubic curve} is a plane curve of degree $3$.
An irreducible cubic $C_3$ is either a \emph{rational} or an \emph{elliptic curve} according as $C_3$ has a singular point or does not. Here, a singular point of $C_3$ is a point $P$ in $PG(2,\mathbb{K})$ such that the intersection multiplicity $I(P,C\cap \ell)$ at $P$ between $C_3$ and any line $\ell$ of $PG(2,\mathbb{K})$ equals $2$ with one or two exceptions according as $P$ is a node or a cusp. The exceptional lines are the \emph{tangents} to $C_3$ at $P$.

If $C_3$ is an irreducible and rational cubic curve defined over $\mathbb{F}_q$ then there are three possibilities for the number $N_q$ of points of $C_3$ in $PG(2,\mathbb{F}_q)$ depending upon the behavior of the unique singular point of $C_3$; namely $N_q=q$ when $P$ is a node with both tangents defined over $\mathbb{F}_q$, $N_q=q+1$ when $P$ is a cusp, and $N_q=q+2$ when $P$ is an (isolated) node with both tangents defined over the quadratic extension of $\mathbb{F}_q$. In $AG(2,\mathbb{F}_q)$, we have some more possibilities according to the number of points of $C_3$ lying on the line at infinity. In fact, let $AG(2,\mathbb{F}_q)$ be the affine plane whose projective closure is $PG(2,\mathbb{F}_q)$. Suppose that $C_3$ has either $2$ or $3$ points at infinity defined over $\mathbb{F}_q$, none of them is singular. Then the number of $\mathbb{F}_q$-rational point of $C_3$ in $AG(2,\mathbb{F}_q)$ belongs to the interval $[q-3,q]$.    

If $C_3$ is an elliptic curve, then the points of $C_3$ in $PG(2,\mathbb{F}_q)$ are exactly the $\mathbb{F}_q$-rational point of $C_3$. Thus, their number belongs to the interval $[q+1-2\sqrt{q},q+1+2\sqrt{q}]$ by the Hasse theorem. The spectrum of the number of $\mathbb{F}_q$-rational points of an elliptic curve defined over $\mathbb{F}_q$ was determined by Waterhause;  see \cite[Section 9.9]{HKT}. For $q=2^h$, there exists an elliptic cubic over $\mathbb{F}_q$
with precisely $N_q=q+1 -m$ rational points$,$ where $|\,m\,|\leq
2\sqrt{q},$ for only $m\equiv 1 \pmod{2}$, $m=0$, $m=\pm \sqrt{q}$ with $h$ even, and $m=\pm \sqrt{2q}$ with $h$ odd. Let $AG(2,\mathbb{F}_q)$ be the affine plane whose projective closure is $PG(2,\mathbb{F}_q)$. If the intersection of $C_3$ with the line $\ell_\infty$ at infinity contains  at least two points defined over $\mathbb{F}_q$ and $|C_3\cap \ell_\infty|=2,3$ then the number of $\mathbb{F}_q$-rational point of $C_3$ in $AG(2,\mathbb{F}_q)$ belongs to the interval $[q-2\sqrt{q}-2,q+2\sqrt{q}-1]$.    

\subsection{Finite fields of order $q=2^h$}
Some basic facts concerning the trace function in finite fields of even characteristic are used in the proofs.
In the finite field $\mathbb{F}_q$ of order $q=2^h$, the \emph{trace} of an element is defined to be
$\mathfrak{Tr}(x)=x+x^2+x^4+\ldots+x^{2^{h-1}}$. Let $\mathcal{C}_0=\{x\in\mathbb{F}_q|\mathfrak{Tr}(x)=0\}$ and $\mathcal{C}_1=\{y\in\mathbb{F}_q|\mathfrak{Tr}(y)=1\}.$ Then $\mathcal{C}_0\cup \mathcal{C}_1=\mathbb{F}_2$ and  an equation $X^2+X+v=0$ with $v\in \mathbb{F}_q$ has either two, or zero solutions in $\mathbb{F}_q$ according as $v\in \mathcal{C}_0$, or $v\in \mathcal{C}_1$. For any $x\in \mathbb{F}_q$, there exists  $t\in\mathbb{F}_q$ such that $x=t^2+t$ if and only if $x\in\mathcal{C}_0$. For details the reader is referred to \cite{HJWP}.

\subsection{Linear codes}
A \emph{linear code} $ C $ of length $ n $ over a finite field $\mathbb{F}_q$ is a subspace of the vector space $\mathbb{F}_q^n$ over $\mathbb{F}_q$. The vectors in $C$ are the \emph{codewords}, and  if $C$ has dimension $k$ then it is a linear code of \emph{dimension} $k$. Fix a basis of $\mathbb{F}_q^n$. The \emph{weight} of a codeword is the number of its non-zero coordinates (entries). The \textit{Hamming distance} of two codewords $u,v\in C$ is the weight of $u-v $. The \textit{minimum distance} $d$ of a code $ C $ is the minimum of distances of all two distinct codewords of $ C$ or, equivalently, the minimum weight of the non-zero vectors of $C$. A $[n,k,d]_q $\emph{-code} is a linear code with above parameters $n,k,d$.

One may ask whether a given code is a ``good" one compared to others. Such a comparison of linear codes sharing  the same length and dimension is usually done with respect to their minimum distances, and in some cases by two further parameters, namely the \emph{relative distance} $\delta=d/n$ and the \emph{information} or \emph{dimension rate}  $R=k/n$. Codes with higher rates are considered to be better than codes with lower rates.

\subsubsection{The Datta-Johnsen code and its reduction}
\label{dj}The elementary symmetric polynomials $\sigma_m^i$ together with their $\mathbb{F}_q$-linear combinations form an ($m+1$)-dimensional $\mathbb{F}_q$-subspace in $\mathbb{F}_q[X_1,\ldots,X_m]$. Evaluating these polynomials on the set of all distinguished points in $\mathbb{F}_q^m$ (i.e. on all points with pairwise distinct coordinates in $\mathbb{F}_q^m$) is the \emph{Datta-Johnsen code} $C_m$ introduced in \cite{datta2023codes}. For $m < q$, the code $C_m$ is a non-degenerate $[n,k,d]_q$ code, where $n =P(q,m)$ with
\begin{equation*}
P(q,m)=
\begin{cases}
\binom{q}{m} m! \quad \text{if} \ \ m \le q,\\
\,\,\,\,0 \quad \text{otherwise},
\end{cases}
\end{equation*} $k=m+1$ and $d =(q-m)P(q-1,m-1)$; see \cite[Proposition 3.2]{datta2023codes}. In \cite[Remark 3.3]{datta2023codes}, the authors pointed out that the distinguished points are partitioned into $\binom{q}{m}$ subsets each of which is an orbit of the symmetric group of degree $m$. Therefore, a smaller evaluation code $C_m'$ can be obtained by evaluating symmetric polynomials on an ordered set $\mathcal{Q}$ of representatives of those orbits. Such codes are named \emph{reduced Datta-Johnsen codes}. For $m < q$ , $C_m'$ is a non-degenerate $[N, K, D]_q$ linear code where $N = \binom{q}{m}$, $K=m+1$ and $D=\binom{q}{m}- \binom{q-1}{m-1}$; see \cite[Proposition 3.4]{datta2023codes}.
\section{The pointset $\bar{\Delta}$ covered by all parabolas of equation $y=ax^2$ with $\rm{Tr}(a)=0$}\label{dede}

In the affine plane $\Pi\cong AG(2,\mathbb{F}_q)$ with coordinates $(Y_1,Y_2)$, the parabolas $\mathcal{P}_a$ of equation $Y_2=aY_1^2$ with $\mathfrak{Tr}(a)=0$ cover the pointset
\begin{equation}
    \label{delta}
   \bar{\Delta}=\{(y_1,a y_1^2)|y_1\in\mathbb{F}_q, \mathfrak{Tr}(a)=0\}.
\end{equation}
Note that for $a=0$, $\mathcal{P}_a$ is the line $Y_2=0$. Since $\bar{\Delta}=\Delta\cup \{O\}$ with $\Delta$ introduced in Section \ref{sec1}, we have
\begin{equation*}\label{delta2}
     \bar{\Delta}=\{(x,(t^2+t) x^2)|x,t\in\mathbb{F}_q\}.
     \end{equation*}
As exactly one half of the elements in $\mathbb{F}_q$ have zero trace, $\bar{\Delta}$ has size $\frac{1}{2} q(q-1)+1$.

Take another affine plane $\Sigma\cong AG(2,\mathbb{F}_q)$ with coordinates $(X_1,X_2)$. Let $\varphi_2$ be the map from $\Sigma$ to $\Pi$ which takes the point $(x_1,x_2)$ to the point $(y_1,y_2)$ with $y_1=x_1+x_2$ and $y_2=x_1x_2$. This map is not surjective as the points $(x_1,x_2)$ and $(x_2,x_1)$ have the same image. More precisely, we show that $\varphi_2$ takes $\Sigma$ into the set covered by $\bar{\Delta}$ together with the line $Y_1=0$. If $(y_1,y_2)$ is the image of $(x_1,x_2)$ by $\varphi_2$, then
$x_1,x_2$ are the roots of the quadratic polynomial $X^2+y_1X+y_2$. If $y_1\ne 0$, replace $X$ by $Z=X/y_1$. Then the arising polynomial
$$Z^2+Z+\frac{y_2}{y_1^2}$$
has two roots in $\mathbb{F}_q$. Therefore $$\mathfrak{Tr}\left(\frac{y_2}{y_1^2}\right)=0,$$
see \cite[Section 1.4]{HJWP}.
Setting $a=y_2/y_1^2$, the image of $(x_1,x_2)$ is on the parabola of equation $Y_2=aY_1^2$ with $\mathfrak{Tr}(a)=0$.
The converse also holds. Since the image of the line of equation $X_1=X_2$ is the line of equation $Y_1=0$, the claim is proven. It may be noticed that the images of the distinguished points of $\Sigma$ are exactly the points of $\bar{\Delta}$ outside the line $Y_1=0$.

As we have  pointed out in Section \ref{sec1}, $\Delta$ may be viewed as the analog of the set of all external points to a parabola in a plane of odd order.
\subsection{Intersection of a line with $\bar{\Delta}$}
\label{linesec}
First the case where $\ell$ is a non-vertical line is considered. In this case, $\ell$ has equation $Y_2=mY_1+b$. If $m=0,b=0$, then $\ell$ is contained in $\bar{\Delta}$, and hence $|\ell\cap \bar{\Delta}|=q$. Otherwise, in the map $\varphi_2$, $\ell$ is the image of the hyperbole $\mathcal{H}$ with equation $X_1X_2-m(X_1+X_2)+b=0$. Since the line $X_1=X_2$ meets $\mathcal{H}$ in exactly one point, each of the remaining $q-2$ points of $\mathcal{H}$ is a distinguished point. Therefore, if $b\ne0$, $|\ell\cap \bar{\Delta}|=\frac{1}{2} (q-2)$. If $b=0$, the line $X_1=X_2$ meets $\mathcal{H}$ in $O$ and $|\ell\cap\bar{\Delta}|=\frac{1}{2}(q-2)+1=\frac{1}{2}q-2$. Now, the case where $\ell$ is a vertical line is considered. Then $\ell$ has equation $Y_1=b$ and $\ell$ is the image by the map $\varphi_2$ of the line $\ell_b: X_1+X_2=b$. If $b\neq0$, then $\ell_b$ is parallel to the line $X_1=X_2$. Therefore $|\ell\cap\bar{\Delta}|=\frac{1}{2}q$. If $b=0$, then $\ell_b$ coincides with the line $X_1=X_2$ and $\ell\cap\bar{\Delta}=\{O\}$.

\subsection{Intersection of a conic with $\bar{\Delta}$}
\label{conicsec}
Let $C$ be a conic of Equation (\ref{eq21072025}). From now on we suppose that the triples
\begin{equation}
\label{eq21112025} (a_{11},a_{12},a_{22}), (a_{11},a_{13},a_{33}), (a_{11},a_{22},a_{23}), (a_{12},a_{13},a_{23}),(a_{22},a_{23},a_{33}) 
\end{equation}
are non-trivial. 
Let $\mathcal F$ be the (possible reducible) curve of equation
\begin{equation*}
\label{eq21072025A}
F(X,T)=a_{11}X^2+a_{12}(T^2+T)X^3+a_{22}(T^4+T^2)X^4+a_{13}X+a_{23}(T^2+T)X^2+a_{33}=0
\end{equation*}
defined in the affine plane $AG(2,\mathbb{F}_q)$ with coordinates $(X,T)$, and also viewed as a curve in $AG(2,\mathbb{K})$. If $a_{33}\ne 0$ then the line $X=0$ is not a component of $\mathcal{F}$, and in this case we put $\mathcal{F}^{(0)}=\mathcal{F}$. If $a_{33}=0$ then $\mathcal{F}$  splits into two components, namely a (possibly reducible) curve $\mathcal{F}^{(s)}$ and the line of equation $x=0$ counted with multiplicity $s$ where $s=1$ for $a_{13}\ne 0$, and $s=2$, for $a_{13}=0,a_{11}\ne 0$. The equations of $\mathcal{F}^{(s)}$ are
\begin{equation*}
\label{eq21072025A0}
F^{(0)}(X,T)=a_{11}X^2+a_{12}(T^2+T)X^3+a_{22}(T^4+T^2)X^4+a_{13}X+a_{23}(T^2+T)X^2+a_{33}=0,\,\, a_{33}\ne 0,
\end{equation*}
\begin{equation*}
\label{eq21072025AA}
F^{(1)}(X,T)=a_{11}X+a_{12}(T^2+T)X^2+a_{22}(T^4+T^2)X^3+a_{13}+a_{23}(T^2+T)x=0,\,\, a_{13}\ne 0,
\end{equation*}
and
\begin{equation*}
\label{eq21072025AAA}
F^{(2)}(X,T)=a_{11}+a_{12}(T^2+T)X+a_{22}(T^4+T^2)X^2+a_{23}(T^2+T)=0,\,\,  a_{11}\ne 0,
\end{equation*}
respectively.
Notice that $\mathcal{F}$ and $\mathcal{F}^{(s)}$ share the same points outside the line $X=0$.

The following lemma shows that the number of common points of $\bar{\Delta}$ and $C$ is determined by the number of points of $\mathcal{F}$ in $AG(2,\mathbb{F}_q)$. 
\begin{lemma}
\label{lem27102025}
$$|\bar{\Delta}\cap C|=
\begin{cases}
{\mbox{$\textstyle{\frac{1}{2}}N_q(\mathcal{F})$, for $a_{33}\ne 0$;}}\\
{\mbox{$1+\textstyle{\frac{1}{2}}N_q(\mathcal{F}^{(1)})$, for $a_{33}=0$ and $a_{13}\ne 0$;}}\\
{\mbox{$1+\textstyle{\frac{1}{2}}N_q(\mathcal{F}^{(2)})$, for $a_{33}=a_{13}=0$, and either $a_{23}=0$ or $\mathfrak{Tr}(a_{11}/a_{23})=1;$}}\\
{\mbox{$\textstyle{\frac{1}{2}}N_q(\mathcal{F}^{(2)})$, for $a_{33}=a_{13}=0$, $a_{23}\ne 0$, and $\mathfrak{Tr}(a_{11}/a_{23})=0.$}}\\
\end{cases}
$$
\end{lemma}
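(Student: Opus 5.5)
The plan is to count $\bar{\Delta}\cap C$ through the parametrization $\psi\colon(x,t)\mapsto (x,(t^2+t)x^2)$ of $\bar{\Delta}$. The key identity is that substituting $Y_1=X$, $Y_2=(T^2+T)X^2$ into Equation (\ref{eq21072025}) gives exactly $F(X,T)$. Hence a point $(x,t)\in AG(2,\mathbb{F}_q)$ lies on $\mathcal{F}$ if and only if $\psi(x,t)\in C$.

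\emph{Points with $x\neq 0$.} First I determine the fibres of $\psi$ over the points of $\bar{\Delta}$ with $y_1\ne 0$. Every such point is $(y_1,ay_1^2)$ with $a=y_2/y_1^2$ uniquely determined and $\mathfrak{Tr}(a)=0$. By the facts on the trace recalled in the Background, $t^2+t=a$ then has exactly two solutions $t$ and $t+1$ in $\mathbb{F}_q$. So $\psi$ is exactly $2$-to-$1$ from $\{(x,t): x\ne 0\}$ onto $\bar{\Delta}\setminus\{O\}$. Consequently $|(\bar{\Delta}\setminus\{O\})\cap C|$ equals one half of the number of $\mathbb{F}_q$-points of $\mathcal{F}$ off the line $X=0$. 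Since $\mathcal{F}$ and $\mathcal{F}^{(s)}$ share the same points outside $X=0$, the same holds with $\mathcal{F}^{(s)}$ in place of $\mathcal{F}$.

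\emph{Points with $x=0$.} The only point of $\bar{\Delta}$ on $Y_1=0$ is $O$, and $O\in C$ if and only if $a_{33}=0$. It remains to count the $\mathbb{F}_q$-points of the relevant curve on the line $X=0$ and correct for them, case by case.

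\begin{itemize}
\item If $a_{33}\ne 0$, then $F(0,t)=a_{33}\ne0$. So $\mathcal{F}$ has no points on $X=0$, and $O\notin C$. This gives $\frac12 N_q(\mathcal{F})$.
\item If $a_{33}=0$ and $a_{13}\ne0$, then $F^{(1)}(0,t)=a_{13}\ne 0$. There are no points on $X=0$, and $O\in C$. This gives $1+\frac12N_q(\mathcal{F}^{(1)})$.
\item If $a_{33}=a_{13}=0$, the non-triviality of the triple $(a_{11},a_{13},a_{33})$ in (\ref{eq21112025}) forces $a_{11}\ne0$. Then $F^{(2)}(0,t)=a_{11}+a_{23}(t^2+t)$.
  \begin{itemize}
  \item If $a_{23}=0$, this is the nonzero constant $a_{11}$, so there are no points on $X=0$.
  \item If $a_{23}\neq 0$, the equation becomes $t^2+t=a_{11}/a_{23}$. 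It has no solution when $\mathfrak{Tr}(a_{11}/a_{23})=1$. In both of these subcases the count is $1+\frac12 N_q(\mathcal{F}^{(2)})$.
  \item If $a_{23}\ne0$ and $\mathfrak{Tr}(a_{11}/a_{23})=0$, there are exactly two solutions. Hence $\mathcal{F}^{(2)}$ has exactly two $\mathbb{F}_q$-points on $X=0$. The count becomes $1+\frac12\bigl(N_q(\mathcal{F}^{(2)})-2\bigr)=\frac12N_q(\mathcal{F}^{(2)})$.
  \end{itemize}
\end{itemize}

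The only delicate point is the bookkeeping on the line $X=0$: checking that $F$ divided by the appropriate power of $X$ gives the stated $F^{(s)}$, and using the non-triviality hypotheses (\ref{eq21112025}) to exclude degenerate situations such as $a_{11}=a_{13}=a_{33}=0$. The rest is the $2$-to-$1$ trace argument, which is routine.
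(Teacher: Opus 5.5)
Your proposal is correct and follows essentially the same route as the paper: the $2$-to-$1$ map $\psi\colon(x,t)\mapsto(x,(t^2+t)x^2)$ off the line $X=0$, combined with a case-by-case count of the $\mathbb{F}_q$-points of $\mathcal{F}^{(s)}$ on $X=0$ and of whether $O\in C$. Your version is slightly more explicit than the paper's, for instance in invoking the non-triviality of $(a_{11},a_{13},a_{33})$ to get $a_{11}\neq 0$ and in computing $F^{(2)}(0,t)=a_{11}+a_{23}(t^2+t)$.
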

\begin{proof} The quadratic transformation $\psi$ given by $(X,T)\to (X,Y)$  with $Y=(T^2+T)$ $X^2$
takes the point $(x,t)\in \mathcal F$ to the point
$(x,(t^2+t)x^2)\in \bar{\Delta}\cap C$. More precisely, $\psi$, viewed as a map from the set of all points of $\mathcal{F}$ into $\bar{\Delta}\cap C$, is an almost surjective map. In fact,  every point $(x,(t^2+t)x^2)\in \bar{\Delta}\cap C$ with $x\ne 0$ is the image of exactly two points which are $(x,t)$ and $(x,t+1)$ of $\mathcal{F}$ while, for $x=0$, $\bar{\Delta}\cap C$ has a unique point, namely $(0,0)$, which may be the image of some points $(0,t)\in \mathcal{F}$, but this only occurs when $a_{33}=0$, and the line $X=0$ is a component of $\mathcal{F}$. For $a_{33}\ne 0$, we have $\mathcal{F}=\mathcal{F}^{(0)}$, and the claim follows from the fact that $\mathcal{F}^{(0)}$ has no point on the line $X=0$. If $a_{33}=0$, then $C$ passes through $O=(0,0)$ but $\mathcal{F}^{(1)}$ does not. Therefore, replacing $\mathcal F$ by $\mathcal{F}^{(1)}$ in the above argument shows the claim concerning $\mathcal{F}^{(1)}$. If both $a_{33}$ and $a_{13}$ vanish, we look at $\mathcal{F}^{(2)}$. The claim for this case follows from the fact that  $\mathcal{F}^{(2)}$ contains no point on the line $X=0$ when either $a_{23}=0$, or $\mathfrak{Tr}(a_{11}/a_{23})=1$, and it contains exactly two points when $a_{23} \ne 0$ and  $\mathfrak{Tr}(a_{11}/a_{23})=0$.
\end{proof}

Therefore, we need to count the points of $\mathcal{F}$. Our counting will be done on another curve birationally equivalent to $\mathcal{F}$.

The quadratic transformation $\omega$ given by $\omega:(X,T)\mapsto (X,V)$ with $V=XT$ is injective in the affine plane $AG(2,\mathbb{F}_q)$ with coordinates $(X,T)$ except for the points $(x,t)$ with $x=0$.
Also, $\omega$ is not surjective, as
${\rm{Im}}(\omega)$ in the affine plane $AG(2,\mathbb{F}_q)$ with coordinates $(X,V)$ consists of all points other than those on the $V$-axis and plus the origin.

Moreover, the image $\mathcal{G}$ of the curve $\mathcal{F}$ is the curve of equation
\begin{equation*}
\label{eq21072025BO}
G(X,V)=a_{11}X^2+a_{12}(XV^2+X^2V)+a_{22}(V^4+V^2X^2)+a_{13}X+a_{23}(V^2+VX)+a_{33}=0.
\end{equation*}
In particular, $\omega$ transforms $\mathcal{F}^{(0)}$ into the  curve
$\mathcal{G}^{(0)}$ of equation
\begin{equation*}
\label{eq21072025B}
G^{(0)}(X,V)=a_{11}X^2+a_{12}(XV^2+X^2V)+a_{22}(V^4+V^2X^2)+a_{13}X+a_{23}(V^2+VX)+a_{33}=0,\,\, a_{33}\ne 0,
\end{equation*}
Also, for $s=1,2$, $\omega$ transforms $\mathcal{F}^{(s)}$ to the curve $\mathcal{G}^{(s)}$ of equations
\begin{equation*}
\label{eq21072025BA}
G^{(1)}(X,V)=a_{11}X^2+a_{12}(XV^2+X^2V)+a_{22}(V^4+V^2X^2)+a_{13}X+a_{23}(V^2+VX)=0,\,\, a_{13}\ne 0,
\end{equation*}
\begin{equation*}
\label{eq21072025BB}
G^{(2)}(X,V)=a_{11}X^2+a_{12}(XV^2+X^2V)+a_{22}(V^4+V^2X^2)+a_{23}(V^2+VX)=0,\,\, a_{11}\ne 0,
\end{equation*}
respectively.
For $s=0,1,2$, the curve $\mathcal{G}^{(s)}$ may contain some points on the line of equation $X=0$, as $\mathcal{G}^{(s)}$ has zero, or one, or two points according as the polynomial $a_{22}V^4+a_{23}V^2+a_{33}$ has zero, or one, or two roots. More precisely, this number equals one if and only if exactly one of $a_{22}$ and $a_{23}$  vanishes, while it is zero when  either $a_{22}=a_{23}=0$ (and $a_{33}\ne 0$ by hypothesis), or  
$\mathfrak{Tr}(b)=1$, and it is two  when $\mathfrak{Tr}(b)=0$ where  $b=a_{33}a_{22}/a_{23}^2.$
Since $\mathcal{F}^{(s)}$ has no point on the line $X=0$ for $s=0,1$, its number of points may change a bit  under the action of $\omega$.
\begin{equation}\label{eq19112025}
    N_q(\mathcal{F}^{(0)})=
\begin{cases}
 {\mbox{$N_q(\mathcal{G}^{(0)})$, for $a_{23}\ne 0$, $a_{22}\ne 0$ and $\mathfrak{Tr}(a_{22}a_{33}/a_{23}^2)=1$}};\\
  {\mbox{$N_q(\mathcal{G}^{(0)})$, for $a_{22}=a_{23}=0$}};\\
 {\mbox{$N_q(\mathcal{G}^{(0)})-1$, when exactly one of $a_{22}=0$ and $a_{23}=0$ holds;}}\\
 {\mbox{$N_q(\mathcal{G}^{(0)})-2$, for $a_{23}\ne 0$, $a_{22}\ne 0$ and $\mathfrak{Tr}(a_{22}a_{33}/a_{23}^2)=0$.}}\\
\end{cases}
\end{equation}
\begin{equation}\label{eq20112025}
    N_q(\mathcal{F}^{(1)})=
\begin{cases}
 {\mbox{$N_q(\mathcal{G}^{(1)})-1$, for $a_{23}=0$, or $a_{22}=0$;}}\\
 {\mbox{$N_q(\mathcal{G}^{(1)})-2$, for $a_{23}\ne 0$ and $a_{22}\ne 0$
 .}}\\
\end{cases}
\end{equation}
\begin{equation}\label{eq20112025A}
    N_q(\mathcal{F}^{(2)})=
\begin{cases}
 {\mbox{$N_q(\mathcal{G}^{(2)})-1$, for either $a_{23}=0$ or $\mathfrak{Tr}(a_{11}/a_{23})=1$ and $a_{22}=0$;}}\\
 {\mbox{$N_q(\mathcal{G}^{(2)})+1$, for $a_{23}\ne 0$, $\mathfrak{Tr}(a_{11}/a_{23})=0$ and $a_{22}=0$;}}\\
 {\mbox{$N_q(\mathcal{G}^{(2)})-2$, for $a_{23}\ne 0$, $\mathfrak{Tr}(a_{11}/a_{23})=1$ and $a_{22}\ne 0$;}}\\
 {\mbox{$N_q(\mathcal{G}^{(2)})$, for $a_{23}\ne 0$, $\mathfrak{Tr}(a_{11}/a_{23})=0$ and $a_{22}\ne 0$.}}\\
\end{cases}
\end{equation}
\begin{proposition}
\label{pro02012026} Assume that $a_{12}=a_{22}=a_{23}=0$. Then $a_{11}a_{13}a_{33}\neq 0$, and 
$$|\Delta\cap C|=|\bar{\Delta}\cap C|=N_q(\mathcal{F})=N_q(\mathcal{G})=
\begin{cases}
{\mbox{$q$, for $\mathfrak{Tr}(a_{11}a_{33}/a_{13}^2)=0$}};\\
{\mbox{$0$, for $\mathfrak{Tr}(a_{11}a_{33}/a_{13}^2)=1$}}.
\end{cases}
$$
%$|\Delta|=q$.
\end{proposition}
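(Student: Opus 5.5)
The plan is to pin down the conic first, then count $\Delta\cap C$ directly, and finally transfer the count to $\mathcal F$ and $\mathcal G$. With $a_{12}=a_{22}=a_{23}=0$, the standing hypothesis that the triples in (\ref{eq21112025}) are non-trivial forces $a_{11}\ne0$ (from $(a_{11},a_{12},a_{22})$), $a_{13}\neq 0$ (from $(a_{12},a_{13},a_{23})$) and $a_{33}\ne 0$ (from $(a_{22},a_{23},a_{33})$). This gives $a_{11}a_{13}a_{33}\ne0$. The conic $C$ then reduces to $a_{11}X^2+a_{13}X+a_{33}=0$, with no $Y$ appearing, so it is a pair of vertical lines.

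Next I substitute $X=(a_{13}/a_{11})Z$ to obtain $Z^2+Z+a_{11}a_{33}/a_{13}^2=0$. By the trace criterion recalled in the background, this has two distinct roots in $\mathbb{F}_q$ when $\mathfrak{Tr}(a_{11}a_{33}/a_{13}^2)=0$ and none when the trace is $1$. In the trace-$0$ case, $C$ consists of two distinct vertical lines $X=x_1$ and $X=x_2$ over $\mathbb{F}_q$, and $x_1x_2=a_{33}/a_{11}\ne0$, so neither line is $X=0$. Section \ref{linesec} shows that a vertical line $Y_1=b$ with $b\ne0$ meets $\bar\Delta$ in exactly $\frac12 q$ points. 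Hence $|\bar\Delta\cap C|=q$ in the trace-$0$ case and $0$ in the trace-$1$ case. Because $a_{33}\ne0$, the origin $O$ is not on $C$, so $|\Delta\cap C|=|\bar\Delta\cap C|$.

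For the curves, since $a_{12}=a_{22}=a_{23}=0$, both $F(X,T)$ and $G(X,V)$ reduce to the same polynomial $a_{11}X^2+a_{13}X+a_{33}$. In particular $\mathcal F=\mathcal F^{(0)}$ and $\mathcal G=\mathcal G^{(0)}$. Moreover, $a_{22}=a_{23}=0$ is exactly the second case of (\ref{eq19112025}), which gives $N_q(\mathcal F)=N_q(\mathcal G)$. This is also visible directly: both curves are the same pair of vertical lines $X=x_i$ with $x_i\neq 0$, and $\omega$ maps $X=x_i$ bijectively onto $X=x_i$ via $t\mapsto x_it$.

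The step I expect to be the real obstacle is matching the counting convention in the final equality $|\bar\Delta\cap C|=N_q(\mathcal F)$. Read literally, each line $X=x_i$ carries $q$ affine $\mathbb{F}_q$-points, so $\mathcal F$ has $2q$ points and Lemma \ref{lem27102025} gives $|\bar\Delta\cap C|=\frac12N_q(\mathcal F)=q$. To state the chain exactly as in the proposition, I would count the points of $\mathcal F$ (and likewise $\mathcal G$) up to the involution $(x,t)\mapsto(x,t+1)$ that $\psi$ identifies in the proof of Lemma \ref{lem27102025}. Under that convention each orbit corresponds to one point of $\bar\Delta\cap C$, and the displayed values $q$ and $0$ follow for all four quantities. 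I would state this identification explicitly so the equality is unambiguous, and cross-check it against the direct line count above.
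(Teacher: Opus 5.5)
Your proof is correct, and it obtains the intersection numbers by a different route from the paper. The paper stays on the auxiliary curve. It writes $F(X,T)=a_{11}X^2+a_{13}X+a_{33}$ and gets $a_{11}a_{13}a_{33}\ne0$ from the triples (\ref{eq21112025}), exactly as you do. It then uses the trace criterion to assert $N_q(\mathcal F)=q$ or $0$, and $|\bar\Delta\cap C|$ is meant to be read off from that.

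You instead count directly in the $(Y_1,Y_2)$-plane. There $C$ is the pair of distinct vertical lines $Y_1=x_i$ with $x_1x_2=a_{33}/a_{11}\ne0$. Each line meets $\bar\Delta$ in $\frac12q$ points by Section \ref{linesec}, and $O\notin C$. This is more elementary, and it is what actually certifies the values $q$ and $0$.

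Your concern about the counting convention is well founded. Read literally, $\mathcal F$ and $\mathcal G$ both consist of the two lines $X=x_i$, so each has $2q$ affine $\mathbb F_q$-points in the trace-$0$ case. The paper's claim $N_q(\mathcal F)=q$ is therefore off by a factor $2$. The literal chain $|\bar\Delta\cap C|=N_q(\mathcal F)=N_q(\mathcal G)$ in the statement does not hold. What is true is $|\Delta\cap C|=|\bar\Delta\cap C|=\tfrac12N_q(\mathcal F)=\tfrac12N_q(\mathcal G)$, which agrees with Lemma \ref{lem27102025} and with the case $a_{22}=a_{23}=0$ of (\ref{eq19112025}).

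I would not introduce an orbit-counting convention for $N_q$ just for this proposition. It would clash with Lemma \ref{lem27102025}, where $N_q$ is the plain point count and the factor $\frac12$ accounts for exactly that involution. Instead, state the corrected chain with the factor $\frac12$, together with $N_q(\mathcal F)=N_q(\mathcal G)\in\{2q,0\}$.
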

\begin{proof} In this case, $\mathcal{F}(X,T)=\mathcal{F}^{(0)}(X,T)=a_{11}X^2+a_{13}X+a_{33}$.  From hypothesis (\ref{eq21112025}),  none of $a_{11},a_{13}$ and $a_{33}$ vanishes. Therefore, 
either $N_q(\mathcal{F})=q$, or$N_q(\mathcal{F}_q)=0$ according as $\mathfrak{Tr}(b)=0$, or $\mathfrak{Tr}(b)=1$ where $b=a_{11}a_{33}/a_{13}^2$.  
\end{proof}
Furthermore, a straightforward argument relying on the above equations  proves the following claims.
\begin{proposition}
\label{pro12082025B1}
Assume that $a_{12}=a_{22}=0,a_{23}\ne 0$, and let $a_{23}=1$. Then $\mathcal{G}^{(s)}$ is a conic which degenerates if and only if $a_{33}=a_{13}^2$. If  $\mathcal{G}^{(s)}$ is non-degenerate then it is either a hyperbole or an ellipse not passing through $O$, according as $\mathfrak{Tr}(a_{11})=0$, or $\mathfrak{Tr}(a_{11})=1$, and accordingly
$$N_q(\mathcal{G}^{(s)})=
\begin{cases}
{\mbox{$q-1$, for $\mathfrak{Tr}(a_{11})=0$}};\\
{\mbox{$q+1$, for $\mathfrak{Tr}(a_{11})=1$}}.
\end{cases}
$$
If $\mathcal{G}^{(s)}$ is degenerate then
$$G^{(s)}(X,V)=((\alpha+1)X+V+a_{13})(\alpha X+V+a_{13})$$ with $\alpha^2+\alpha+a_{11}=0$, and $\mathcal{G}^{(s)}$ splits into two distinct  lines through the point $(0,a_{13}),$ which are defined over $\mathbb{F}_q$ if and only if $\mathfrak{Tr}(a_{11})=0$, and in this case
$N_q(\mathcal{G}^{(s)})=2q-1$, otherwise  $\mathfrak{Tr}(a_{11})=1$ and $N_q(\mathcal{G}^{(s)})=1$.
\end{proposition}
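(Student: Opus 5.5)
With $a_{12}=a_{22}=0$ and $a_{23}=1$, the defining polynomial of $\mathcal{G}$ reduces to
$$G(X,V)=a_{11}X^2+V^2+XV+a_{13}X+a_{33}.$$
For $s=1,2$ we drop $a_{33}$, respectively $a_{33}$ and $a_{13}$, in accordance with the definitions of $G^{(1)}$ and $G^{(2)}$. So $\mathcal{G}^{(s)}$ is a conic of Equation (\ref{eq21072025}) in the coordinates $(X,V)$, with coefficients
$$(a_{11},\,a_{12},\,a_{22},\,a_{13},\,a_{23},\,a_{33})\;\longmapsto\;(a_{11},\,1,\,1,\,a_{13},\,0,\,a_{33}).$$
I will apply the degeneracy criterion (\ref{conde}) to this coefficient vector. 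The terms containing the old $a_{23}$ vanish, and what remains is
$$1\cdot a_{13}^2+a_{33}\cdot 1^2=a_{13}^2+a_{33}.$$
Since we are in characteristic $2$, this is zero exactly when $a_{33}=a_{13}^2$. This settles the degeneracy claim. It covers $s=1,2$ as well, since there $a_{33}=0$, and for $s=2$ also $a_{13}=0$.

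\emph{Non-degenerate case.} I will classify the conic through its points at infinity. These are given by the quadratic part $a_{11}X^2+XV+V^2$. Putting $V=\lambda X$ gives $\lambda^2+\lambda+a_{11}=0$, which has two roots in $\mathbb{F}_q$ or none according as $\mathfrak{Tr}(a_{11})=0$ or $1$. So the conic has two points at infinity (a hyperbola) or none (an ellipse). A non-degenerate projective conic has $q+1$ points over $\mathbb{F}_q$, so the affine count is $q-1$ or $q+1$ respectively. For the claim that the conic does not pass through $O$: $G(0,0)=a_{33}$, and in the non-degenerate case $a_{33}\neq a_{13}^2$. This forces $a_{33}\ne0$ when $a_{13}=0$. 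I expect the subcases where $a_{33}=0$ but $a_{13}\neq 0$ to need a separate remark, since there the origin does lie on the conic. I will state the point count independently of this remark, as it does not depend on it.

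\emph{Degenerate case.} Here $a_{33}=a_{13}^2$. Let $\alpha$ be a root of $\alpha^2+\alpha+a_{11}=0$, taken in $\mathbb{F}_q$ or in $\mathbb{F}_{q^2}$. I will expand $((\alpha+1)X+V+a_{13})(\alpha X+V+a_{13})$ directly. The $X^2$ coefficient is $\alpha^2+\alpha=a_{11}$. The $XV$ coefficient is $(\alpha+1)+\alpha=1$. The $V^2$ coefficient is $1$. The $X$ coefficient is $a_{13}(2\alpha+1)=a_{13}$. The $V$ coefficient is $2a_{13}=0$. The constant is $a_{13}^2=a_{33}$. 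This recovers $G^{(s)}$.

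Both lines pass through $(0,a_{13})$, and they are distinct because their slopes $\alpha$ and $\alpha+1$ differ. If $\mathfrak{Tr}(a_{11})=0$, then $\alpha\in\mathbb{F}_q$, so both lines are defined over $\mathbb{F}_q$. They then contain $q+q-1=2q-1$ affine points. If $\mathfrak{Tr}(a_{11})=1$, the two lines are conjugate over $\mathbb{F}_{q^2}$. Their only $\mathbb{F}_q$-rational affine point is then the common point $(0,a_{13})$, which gives $N_q=1$.

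The only delicate step is bookkeeping the affine versus projective point counts and the position of $O$. The rest is direct substitution.
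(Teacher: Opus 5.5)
Your proposal is correct and is exactly the direct verification the paper leaves as ``a straightforward argument relying on the above equations'': specialize $G$, apply (\ref{conde}) to the coefficient vector $(a_{11},1,1,a_{13},0,a_{33})$, read off the points at infinity from $\lambda^2+\lambda+a_{11}=0$, and expand the stated factorization. Your caveat about $O$ is also right, and you should state it as a correction rather than a hedge: $G^{(1)}(0,0)=0$, so for $s=1$ the non-degenerate conic $\mathcal{G}^{(1)}$ does pass through $O$ (this is exactly the point subtracted in (\ref{eq20112025}) when $a_{22}=0$), and the clause ``not passing through $O$'' holds only for $s=0$, where $a_{33}\neq 0$ by definition.
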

It should be noticed that when $\mathcal{G}$ happens to be a degenerate conic, this does not imply that $C$ is also degenerate.

Lemma \ref{lem27102025} together with (\ref{eq19112025}), (\ref{eq20112025}), (\ref{eq20112025A}) and Proposition \ref{pro12082025B1} give the following result.

\begin{theorem}
    \label{pro19112025AA} Let $C$ be a non-degenerate conic of equation
$$a_{11}X^2+a_{12}XY+a_{22}Y^2+a_{13}X+a_{23}Y+a_{33}=0.$$ Assume that $a_{12}=a_{22}=0,a_{23}\ne 0$, and  let $a_{23}=1$. Then
$$ |\bar{\Delta}\cap C|=
\begin{cases}
\textstyle{\frac{1}{2}}q-1,\,\,{\mbox{for $\mathfrak{Tr}(a_{11})=0$, $a_{33}\ne 0$  and $a_{33}\ne a_{13}^2$; }} \\
\textstyle{\frac{1}{2}}q,\,\,\quad\,\,\,\, {\mbox{for $\mathfrak{Tr}(a_{11})=1$, $a_{33}\ne 0$  and $a_{33}\ne a_{13}^2$; }} \\
\textstyle{\frac{1}{2}}q,\,\,\quad\,\,\,\,{\mbox{for $\mathfrak{Tr}(a_{11})=a_{33}=0$ and $a_{33}\ne a_{13}^2$; }} \\
\textstyle{\frac{1}{2}}q+1,\,\,{\mbox{for $\mathfrak{Tr}(a_{11})=1$, $a_{33}= 0$  and $a_{33}\ne a_{13}^2$; }} \\
q,\,\,\quad\,\,\,\,\,\,\,\,{\mbox{for $\mathfrak{Tr}(a_{11})=0$,   and $a_{33}=a_{13}=0$;}} \\
1,\,\,\quad\,\,\,\,\,\,\,\,{\mbox{for $\mathfrak{Tr}(a_{11})=1$,   and $a_{33}=a_{13}=0$;}} \\
q-1,\,\,\,\,\,\,{\mbox{for $\mathfrak{Tr}(a_{11})=0$,   and $a_{33}=a_{13}^2, a_{33}\ne 0$;}} \\
0,\,\,\quad\,\,\,\,\,\,\,\,{\mbox{for $\mathfrak{Tr}(a_{11})=1$,   and $a_{33}=a_{13}^2, a_{33}\ne 0$.}}
\end{cases}
$$
\end{theorem}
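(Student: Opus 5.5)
The plan is to combine Lemma \ref{lem27102025} with the correction formulas (\ref{eq19112025}), (\ref{eq20112025}), (\ref{eq20112025A}) and the point counts of Proposition \ref{pro12082025B1}. Under the hypotheses $a_{12}=a_{22}=0$ and $a_{23}=1$, the first step is to note that condition (\ref{conde}) reduces to $a_{11}a_{23}^2=a_{11}\neq 0$. So non-degeneracy of $C$ gives $a_{11}\ne 0$, which is needed for $\mathcal{F}^{(2)}$ to make sense. The proof then splits into three cases according to which of $a_{33}$ and $a_{13}$ vanish; these select $s=0,1,2$. In each case the non-degeneracy of $\mathcal{G}^{(s)}$ is governed by whether $a_{33}=a_{13}^2$, and the rationality is governed by $\mathfrak{Tr}(a_{11})$.

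\emph{Case $a_{33}\ne 0$ ($s=0$).} Lemma \ref{lem27102025} gives $|\bar\Delta\cap C|=\frac12 N_q(\mathcal F)$. Since exactly one of $a_{22},a_{23}$ vanishes, (\ref{eq19112025}) gives $N_q(\mathcal F)=N_q(\mathcal G^{(0)})-1$. If $a_{33}\ne a_{13}^2$, Proposition \ref{pro12082025B1} gives $N_q(\mathcal G^{(0)})=q-1$ or $q+1$, according to the trace. This yields $\frac12 q-1$ or $\frac12 q$. If $a_{33}=a_{13}^2\ne0$, then $\mathcal G^{(0)}$ is a pair of lines with $N_q=2q-1$ or $1$. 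This yields $q-1$ or $0$.

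\emph{Case $a_{33}=0$, $a_{13}\ne 0$ ($s=1$).} Here $a_{33}\ne a_{13}^2$, so $\mathcal G^{(1)}$ is non-degenerate. By (\ref{eq20112025}) with $a_{22}=0$, $N_q(\mathcal F^{(1)})=N_q(\mathcal G^{(1)})-1$, which is $q-2$ or $q$. Lemma \ref{lem27102025} then gives $1+\frac12(q-2)=\frac12 q$ or $1+\frac12 q$.

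\emph{Case $a_{33}=a_{13}=0$ ($s=2$).} Now $a_{33}=a_{13}^2$, so $\mathcal G^{(2)}$ is degenerate, and $\mathfrak{Tr}(a_{11}/a_{23})=\mathfrak{Tr}(a_{11})$.
\begin{itemize}
\item If $\mathfrak{Tr}(a_{11})=1$: Lemma \ref{lem27102025} gives $1+\frac12 N_q(\mathcal F^{(2)})$, and (\ref{eq20112025A}) gives $N_q(\mathcal F^{(2)})=1-1=0$. The count is $1$.
\item If $\mathfrak{Tr}(a_{11})=0$: Lemma \ref{lem27102025} gives $\frac12 N_q(\mathcal F^{(2)})$, and (\ref{eq20112025A}) gives $N_q(\mathcal F^{(2)})=(2q-1)+1=2q$. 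The count is $q$.
\end{itemize}

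The computations themselves are mechanical. The step I expect to be the real obstacle is checking that the corrections in (\ref{eq19112025})--(\ref{eq20112025A}) match the geometry of $\mathcal G^{(s)}$ on the line $X=0$. In the degenerate case, both lines pass through $(0,a_{13})$, and this point must be counted once in $N_q(\mathcal G^{(s)})$ and then removed, or in the last subcase compensated, correctly when passing back through $\omega$.

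I would verify this directly on each branch. The points of $\mathcal G^{(s)}$ on $X=0$ are the roots of $V^2+a_{33}=0$, namely the single point $V=a_{13}$ when $a_{33}=a_{13}^2$. Separately, for $s=2$ and $\mathfrak{Tr}(a_{11})=0$, $\mathcal F^{(2)}$ has the two points $(0,t)$ with $t^2+t=a_{11}$. These points are collapsed by $\omega$ and thereby account for the $+1$ in (\ref{eq20112025A}).
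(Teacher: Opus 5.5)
Your proposal is correct and takes the paper's own route. The paper justifies the theorem only by saying that Lemma~\ref{lem27102025}, the correction formulas (\ref{eq19112025})--(\ref{eq20112025A}) and Proposition~\ref{pro12082025B1} combine to give the eight values; your split into $s=0,1,2$ carries out exactly that bookkeeping, all eight counts come out right, and your direct check of the points on $X=0$ confirms the $-1$ and $+1$ corrections that the paper leaves implicit.
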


To study $\mathcal{G}$ in the cases uncovered by Theorem \ref{pro19112025AA}, we need some properties over the projective plane $PG(2,\mathbb{F}_q)$ which is the projective closure of $AG(2,\mathbb{F}_q)$. For this purpose, $PG(2,\mathbb{F}_q)$ will be equipped by homogeneous coordinates $(X_1:X_2:X_3)$ where
$V=X_1/X_3,X=X_2/X_3$.

Assume that $C$ is irreducible and that 
\begin{equation}
\label{eq17112025}
{\mbox{\emph{either} $a_{12}\ne 0$, \emph{or} $a_{22}\ne 0$}}.
\end{equation}
It is straightforward to verify that $\mathcal{G}$ has two singular points, namely $X_\infty=(0:1:0)$ and $Q_\infty=(1:1:0)$, both doubly points with tangents different from the line at infinity.
Take $\bar{v}$ from $\mathbb{F}_q$ or from its algebraic closure $\mathbb{F}$ such that $a_{11}+a_{12}\bar{v}+a_{22}\bar{v}^2=0$.
Then the line $\ell$ of equation $V=\bar{v}$ is a tangent  to $\mathcal{G}$ at $X_\infty$.
Consider the linear system $\Sigma_2$ generated by all conics through $X_\infty$ and $Q_\infty$ which are tangent to $\ell$. Then $\Sigma_2$
consists of all conics $C_{(\alpha:\beta:\gamma)}$ of equations $\alpha(XV+V^2+\bar{v}X)+\beta (V+\bar{v})+\gamma=0$
where $(\alpha:\beta:\gamma)$ runs over all points of $PG(2,\mathbb{F}$). By construction, $\Sigma_2$ cuts out on $\mathcal{G}$ a simple, fixed point free $2$-dimensional linear series of degree $3$.
Therefore, the quadratic transformation $\omega_1$ given by $(X,V)\mapsto (XV+V^2+\bar{v}X,V+\bar{v})$ transforms $\mathcal{G}$ into a curve $\mathcal{H}$ of degree $3$.
Furthermore, $\omega_1$ is neither injective, as the points on the line $\ell_{\bar{v}}$ of equation $V=\bar{v}$ are mapped into the same point $(\bar{v}^2,0)$, nor surjective as the points on the line of equation $V=0$ are not in $Im(\omega_1$) apart from $(\bar{v}^2,0)$.
Moreover, $\omega_1$ is invertible on the points outside $\ell_{\bar{v}}$, as $\theta_1\circ \omega_1$ is the identity transformation outside $\ell_{\bar{v}}$ where $\theta_1$ is defined by $(X,V)\mapsto ((X+V^2+\bar{v}^2)V^{-1},V+\bar{v})$.

A direct computation shows that $\mathcal{H}$ has equation $H(X,V)=0$ with
\begin{equation}
\label{eq09082025}
\begin{array}{llll}H(X,V)=&&(Va_{22}+a_{12})X^2+(V^2a_{12}+Va_{23}+\bar{v}^2a_{12}+\bar{v}a_{23}+a_{13})X+\\
&&(\bar{v}a_{23}+a_{13})V^2+Va_{33}+\bar{v}^3a_{23}+\bar{v}^2a_{13}.
\end{array}
\end{equation}
Equivalently,
\begin{equation}
\label{eq09082025A}
\begin{array}{llll}H(X,V)=&&(a_{12}X+\bar{v}a_{23}+a_{13})V^2+(X^2a_{22}+Xa_{23}+a_{33})V+\\
&& (X+\bar{v}^2)(a_{12}X+\bar{v}a_{23}+a_{13}).
\end{array}
\end{equation}
Moreover, $\mathcal{H}$ has three points at infinity: $V_\infty=(1:0:0)$, $X_\infty=(0:1:0)$ and $Q_\infty=(a_{22}:a_{12}:0)$. Here,
$X_\infty=Q_\infty$ if and only if $a_{22}=0$, and $V_\infty=Q_\infty$ if and only if $a_{12}=0$.
We show that $X_\infty$ is not a singular point of $\mathcal{H}$. 
Assume on the contrary that $X_\infty$ is a double point of $\mathcal{H}$. Then $X_\infty=Q_\infty$. Since $V_\infty\in\mathcal{H}$, the line $\ell_\infty$ is not a tangent to $\mathcal{H}$ at $X_\infty$. Therefore, there exists $v^*\in \mathbb{F}_q$ such that the line  $V=v^*$ is tangent to $\mathcal{H}$ at $X_\infty$. Since $\deg(\mathcal{H})=3$ and $X_\infty$ is a double point, no affine point of the line $V=v^*$ lies on $\mathcal{H}$. Therefore, the coefficient of $X^2$ in (\ref{eq09082025}) vanishes for $V=v^*$. Since $a_{22}=0$, this yields $a_{12}=0$ and (\ref{eq17112025}) does not hold. 
Therefore $X_\infty$ is a non-singular point of $\mathcal{H}$. A similar argument can be used to show that neither $V_\infty$ is a singular point. In fact, if $V_\infty$ is a double point, then the coefficient of  $V^2$ in (\ref{eq09082025A}) vanishes. Therefore, 
$\bar{v}a_{23}+a_{13}=0$. Since $a_{12}=0$, we also have $a_{11}+a_{22}\bar{v}^2=0$. Since the triple $(a_{12},a_{13},a_{23})$ is non-trivial, this yields that (\ref{conde}) holds. Therefore, if $C$ is irreducible, then $V_\infty$ is a non-singular point of $\mathcal{H}$. Clearly, if $Q_\infty\ne X_\infty$ and $Q_\infty\ne V_\infty$, then $Q_\infty$ is a not a singular point of $\mathcal{H}$, as well.  

Since $\mathcal{G}$ has at most one point $(x,\bar{v})$, its number of points does not change under the action of $\omega_1$, that is, $\mathcal{G}$ and $\mathcal{H}$ have the same number of points.

We are in a position to determine when the curve $\mathcal{H}$ and in turns the curve $\mathcal{G}$ is reducible.

First the general case is investigated.
\begin{proposition}
\label{pro12082025} Assume that $a_{12}\ne 0$ and $a_{22}\ne 0$. Then $C$ is degenerate if and only if $\mathcal{H}$ is reducible. Moreover, if $\mathcal{H}$ is reducible then it splits into three linear components.
\end{proposition}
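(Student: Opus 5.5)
Because $\mathcal{H}$ is a cubic, it is reducible exactly when it has a linear component, so I would first classify the possible linear components. Since $a_{22}\ne 0$, the three points at infinity $V_\infty$, $X_\infty$ and $Q_\infty=(a_{22}:a_{12}:0)$ are pairwise distinct, and by the preceding discussion none of them is singular. A reducible cubic containing three distinct non-singular points at infinity, with $\ell_\infty$ not a component (the leading form $a_{22}X^2V+a_{12}X^2+\dots$ is not divisible by the homogenizing variable), must have each component meet $\ell_\infty$ in points of $\{V_\infty,X_\infty,Q_\infty\}$. So a linear component is a line through exactly one of these points.

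I would then test each of the three directions using the two forms (\ref{eq09082025}) and (\ref{eq09082025A}) of $H$.

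\begin{itemize}
\item \textbf{Direction $X_\infty$.} A line $V=v^*$ is a component iff $H(X,v^*)\equiv 0$ in $X$. The $X^2$-coefficient gives $v^*=a_{12}/a_{22}$. The $X$-coefficient gives $a_{12}v^{*2}+a_{23}v^*+a_{12}\bar v^2+a_{23}\bar v+a_{13}=0$, and the constant term gives a third relation.
\item \textbf{Direction $V_\infty$.} A line $X=x^*$ is a component iff (\ref{eq09082025A}) vanishes identically in $V$. This forces $a_{12}x^*+\bar v a_{23}+a_{13}=0$ and $a_{22}x^{*2}+a_{23}x^*+a_{33}=0$.
\item \textbf{Direction $Q_\infty$.} The line has the form $a_{12}V=a_{22}X+c$. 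I would substitute $V=(a_{22}X+c)/a_{12}$ and require the resulting cubic in $X$ to vanish identically.
\end{itemize}

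In each case I would eliminate the auxiliary parameter ($v^*$, $x^*$ or $c$), using $a_{11}=a_{12}\bar v+a_{22}\bar v^2$ to remove $a_{11}$. The goal is to show that the resulting conditions are equivalent to (\ref{conde}). For instance, in the $V_\infty$ case, $x^*=(\bar v a_{23}+a_{13})/a_{12}$. Plugging this into the second condition and multiplying by $a_{12}^2$ should reproduce $a_{11}a_{23}^2+a_{12}a_{23}a_{13}+a_{22}a_{13}^2+a_{33}a_{12}^2$ after substituting $a_{11}$.

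For the converse direction I would argue as follows. If (\ref{conde}) holds, then $x^*$ defined as above satisfies both conditions, so the line $X=x^*$ splits off. This shows that $C$ is degenerate iff $\mathcal{H}$ is reducible, with the $V_\infty$-direction line as witness. Alternatively, one can use the birational chain: $\mathcal{H}$ is birational to $\mathcal{G}$, which is birational to $\mathcal{F}$. In turn, $\mathcal{F}$ corresponds to the pull-back of $C$ under $(x,t)\mapsto (x,(t^2+t)x^2)$, and a degenerate $C$ splits into lines whose preimages give components.

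For the second claim, take the residual conic of $\mathcal{H}$ after removing $X=x^*$. It passes through $X_\infty$ and $Q_\infty$. I would show it splits by exhibiting its second linear component through $X_\infty$ or $Q_\infty$, using the same vanishing computations with (\ref{conde}) assumed. A cleaner route is to note that a conic with a singular point is reducible. One then checks that the intersection point of the component line with the residual conic forces a double point, or directly computes the discriminant of the residual conic and finds that it vanishes identically under (\ref{conde}).

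The main obstacle I expect is this last step. The elimination showing that the residual conic degenerates in characteristic $2$ must use $a_{11}=a_{12}\bar v+a_{22}\bar v^2$ and (\ref{conde}) simultaneously, and the case $\bar v\notin\mathbb{F}_q$ should be handled over $\mathbb{K}$ without loss of generality.
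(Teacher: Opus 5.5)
You correctly reduce to linear components through $V_\infty$, $X_\infty$, $Q_\infty$, and your $V_\infty$ computation is exactly case (i) of the paper. With $x^*=(\bar v a_{23}+a_{13})/a_{12}$ one gets $a_{12}^2(a_{22}x^{*2}+a_{23}x^*+a_{33})=a_{11}a_{23}^2+a_{12}a_{23}a_{13}+a_{22}a_{13}^2+a_{33}a_{12}^2$ once $a_{11}=a_{12}\bar v+a_{22}\bar v^2$ is substituted. So a degenerate $C$ does force the line $X=x^*$ into $\mathcal H$. Beyond that the plan cannot be completed, because the rest of the statement is false.

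For ``$\mathcal H$ reducible $\Rightarrow$ $C$ degenerate'' you need the $Q_\infty$-elimination to yield (\ref{conde}), and it does not. Take $a_{11}=0$ and $a_{12}=a_{22}=a_{13}=a_{23}=a_{33}=1$, i.e.\ $C\colon XY+Y^2+X+Y+1=0$. The left side of (\ref{conde}) is $1$, so $C$ is non-degenerate. Yet for $\bar v=0$
\[
H=(V+1)X^2+(V^2+V+1)X+V^2+V=(X+V+1)(XV+X+V),
\]
and for $\bar v=1$ one gets $H=(X+V)(XV+X+1)$. In both cases a line through $Q_\infty=(1:1:0)$ splits off. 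The reason is that on $C$ one has $Y/X^2=w^2+w$ with $w=(Y+1)/X$, so the Artin--Schreier double cover $V^2+XV=Y$ of $C$ splits: $G=(V^2+XV+V+1)(V^2+XV+X+V+1)$. This is also why your birational-chain remark gives only the forward implication: irreducibility of $C$ does not imply irreducibility of its pull-back. The paper's proof asserts the same false equivalence; in its case (ii) the line $\ell_Q$ is rewritten as $X=w$, so a line through $V_\infty$ is tested again. The failure has consequences: for $q=2^h$ with $h$ odd, this $C$ meets $\Delta$ in all of its $q-1$ affine points, exceeding the bound of Corollary \ref{cor28112025}.

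The ``moreover'' step also fails, so the obstacle you anticipated is a real obstruction: (\ref{conde}) does not make the residual conic degenerate. Changing only $a_{33}$ to $0$ gives the degenerate conic $C\colon (Y+1)(Y+X)=0$. For it, $\bar v=0$ yields $H=(X+1)(V^2+XV+X)$, and $V^2+XV+X$ is irreducible. More generally, for $C=(Y+m_1X+c_1)(Y+m_2X+c_2)$ with $m_1\ne m_2$ and $\bar v=m_1$,
\[
H=(X+c_1)\bigl((m_1+m_2)V^2+(X+c_2)V+(m_1+m_2)(X+m_1^2)\bigr).
\]
The left side of (\ref{conde}), evaluated for this residual conic, equals $(m_1+m_2)(c_2+m_2^2)$. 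So $\mathcal H$ splits into three lines only when $c_2=m_2^2$. Equivalently, this happens only when the line $V=a_{12}/a_{22}$ through $X_\infty$ splits off (the paper's $R_{12}=R_{13}=0$). That condition says $C$ contains the particular line $Y=cX+c^2$ with $c=\bar v+a_{12}/a_{22}$, which is strictly stronger than (\ref{conde}).

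What your argument actually establishes is the single implication ``$C$ degenerate $\Rightarrow$ $X=x^*$ is a component of $\mathcal H$''. Any converse needs an extra hypothesis excluding conics on which $Y/X^2=w^2+w$ for some rational function $w$ on $C$. For example, every $C$ with $a_{22}a_{23}=a_{12}^2$ and $a_{23}a_{33}=a_{13}^2$ has this property.
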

\begin{proof}
Let $\ell$ be a line and suppose $\ell$ to be a linear component of $\mathcal{H}$. Then $\ell$ passes through one of the points $V_\infty$, $X_\infty$ and $Q_\infty$.

(i)
If $V_\infty\in \ell$, let $\ell_V=\ell$. Then $\ell_V$ has equation $X-x=0$ for some $x\in \mathbb{F}$. Therefore, $\ell_V$ is a component of $\mathcal{H}$ if and only if $x$ is a solution of the system in the indeterminate $Z$ arising from (\ref{eq09082025A})
\begin{equation}
\label{eq09082025AA}
\begin{cases}
a_{12}Z+\bar{v}a_{23}+a_{13}=0;\\
a_{22}Z^2+a_{23}Z+a_{33}=0;\\
(Z+\bar{v}^2)(a_{12}Z+\bar{v}a_{23}+a_{13})=0.
\end{cases}
\end{equation}
 The first equation in (\ref{eq09082025AA}) implies the third one.
 Moreover, eliminating $Z$ from the first two equations gives $U_{12}=0$ where
\begin{equation}
\label{eq12082025}
U_{12}=\bar{v}^2a_{22}a_{23}^2+\bar{v}a_{12}a_{23}^2+a_{12}^2a_{33}+a_{12}a_{13}a_{23}+a_{22}a_{13}^2.
\end{equation}
Since $a_{11}+a_{12}\bar{v}+a_{22}\bar{v}^2=0$, it turns out from (\ref{eq12082025}) that the system of the first two equations has a solution if and only if
$$a_{11}a_{23}^2+a_{12}a_{23}a_{13}+a_{22}a_{13}^2+a_{33}a_{12}^2=0,$$
that is, $V_\infty\in \ell_V$ if and only if the conic $C$ is reducible.

(ii) If $Q_\infty \in \ell$, let $\ell_Q=\ell$. Then $\ell_Q$ is parallel to the line  of equation $a_{22}X+a_{12}V=0$. Replacing $a_{22}X+a_{12}V$ by $W$, Equation (\ref{eq09082025A}) of $\mathcal{H}$ becomes
\begin{equation}
\label{eq09082025w}
\begin{array}{llll}L(X,W)=&&(Xa_{12}+\bar{v}a_{23}+a_{13})W^2+a_{12}(X^2a_{22}+Xa_{23}+a_{33})W+\\
&&X^2(a_{12}^3+\bar{v}a_{22}^2a_{23}+a_{11}a_{22}a_{33}+a_{22}^2a_{13})+\\
&&a_{12}x(\bar{v}^2a_{12}^2+\bar{v}a_{12}a_{23}+a_{12}a_{13}+a_{22}a_{33})+\\
&& \bar{v}^2a_{12}^2(\bar{v}a_{23}+a_{13}).
\end{array}
\end{equation}
Also, the equation of $\ell_Q$ becomes $X=w$ for some $w\in \mathbb{F}$.
Therefore, $\ell_Q$ is component of $\mathcal{H}$ if and only if $w$ is a solution of the system in the indeterminate $Z$ arising from (\ref{eq09082025w})
\begin{equation}
\label{eq09082025WW}
\begin{cases}
a_{12}Z+\bar{v}a_{23}+a_{13}=0;\\
a_{12}(a_{22}Z^2+a_{23}Z+a_{33})=0;\\
(a_{12}^3+\bar{v}a_{22}^2a_{23}+a_{11}a_{22}a_{33}+a_{22}^2a_{13})
Z^2+\\
a_{12}(\bar{v}^2a_{12}^2+\bar{v}a_{12}a_{23}+a_{12}a_{13})Z+a_{11}a_{22}a_{33}+
a_{12}^2(\bar{v}^3a_{23}+\bar{v}a_{13})=0.
\end{cases}
\end{equation}
The system of the first two equations has a solution if and only if
$Z_{12}=0$ where
\begin{equation}
\label{eq12082025B}
Z_{12}=\bar{v}^2a_{22}a_{23}^2+\bar{v}a_{12}a_{23}^2+a_{12}^2a_{33}+a_{12}a_{13}a_{23}+a_{22}a_{13}^2,
\end{equation}
and that of the first and the third ones if and only if $Z_{13}=0$ where
\begin{equation}
\label{eq12082025BB}
Z_{13}=(\bar{v}a_{23}+a_{13})Z_{12}.
\end{equation}
Therefore, $\ell$ is a component of $\mathcal{H}$ through $Q_\infty$ if and only if $Z_{12}=0$. Since $Z_{12}=a_{22}U_{12}$, it turns out that
$\ell_Q$ is a component of $\mathcal{H}$ if and only if so is $\ell_V$. Moreover, this is the case if and only if $C$ is reducible.

Before dealing with the last case $X_\infty\in \ell$, we rewrite the above arguments starting off with Equation (\ref{eq09082025}).
Replacing $a_{22}X+a_{12}V$ by $W$, Equation (\ref{eq09082025}) of $\mathcal{H}$ becomes
\begin{equation*}
\label{eq09082025w1}
\begin{array}{llll}L(V,W)=&&(Va_{22}+a_{12})W^2+a_{22}(V^2a_{12}+Va_{23}+\bar{v}^2a_{12}+\bar{v}a_{23}+a_{13})W+\\
&&V^2(\bar{v}a_{22}^2a_{23} + a_{12}^3 + a_{12}a_{22}a_{23} + a_{22}^2a_{13}) +  \\ && V(\bar{v}^2a_{12}^2a_{22} +\bar{v}a_{12}a_{22}a_{23} +a_{12}a_{22}a_{13} + a_{22}^2a_{33}) + \bar{v}^3a_{22}^2a_{23} + \bar{v}^2a_{22}^2a_{13}.
\end{array}
\end{equation*}
Also, the equation of $\ell_X$ becomes $V=v$ for some $v\in \mathbb{F}$. 
The formulas analogous to (\ref{eq09082025WW}), (\ref{eq12082025B}), and (\ref{eq12082025BB}) are
\begin{equation*}
\label{eq09082025WR}
\begin{cases}
Za_{22}+a_{12}=0;\\
a_{22}(Z^2a_{12}+Za_{23}+\bar{v}^2a_{12}+\bar{v}a_{23}+a_{13})=0;\\
Z^2\bar{v}a_{22}^2a_{23} + Z^2a_{12}^3 + Z^2a_{12}a_{22}a_{23} + Z^2a_{22}^2a_{13} + Z\bar{v}^2a_{12}^2a_{22} + \\
Z\bar{v}a_{12}a_{22}a_{23} +Za_{12}a_{22}a_{13} + Za_{22}^2a_{33} + \bar{v}^3a_{22}^2a_{23} + \bar{v}^2a_{22}^2a_{13}=0;
\end{cases}
\end{equation*}
$Q_{12}=0$ where
\begin{equation*}
\label{eq12082025BQ}
Q_{12}=\bar{v}^2a_{12}a_{22}^3 + \bar{v}a_{22}^3a_{23}+ a_{12}^3a_{22} + a_{12}a_{22}^2a_{23} + a_{22}^3a_{13};
\end{equation*}
$Q_{13}=0$ where
\begin{equation*}
\label{eq12082025BBQ}
Q_{13}=\bar{v}^3a_{22}^4a_{23}+ \bar{v}^2a_{12}^3a_{22}^2 + \bar{v}^2a_{22}^4a_{13}+ a_{12}^5 + a_{12}^3a_{22}a_{23} + a_{12}a_{22}^3a_{33}.
\end{equation*}
Therefore, $\ell_Q$ is a component of $\mathcal{H}$ if and only if $Q_{12}=0$ and $Q_{13}=0$. If this is the case then $C$ is reducible.

(iii) If $X_\infty\in \ell$, let $\ell_X=\ell$. Then $\ell_X$ has equation $V=v$ for some $v\in \mathbb{F}$. Therefore, $\ell_X$ is a component of $\mathcal{H}$ if and only if $v$ is a solution of the system in the indeterminate $Z$ arising from (\ref{eq09082025})
\begin{equation*}
\label{eq09082025C}
\begin{cases}Za_{22}+a_{12}=0; \\
Z^2a_{12}+Za_{23}+\bar{v}^2a_{12}+\bar{v}a_{23}+a_{13}=0;\\
Z^2(\bar{v}a_{23}+a_{13})+Za_{33}+\bar{v}^3a_{23}+\bar{v}^2a_{13}=0.
\end{cases}
\end{equation*}
The system of the first two equations has a solution if and only if
$R_{12}=0$ where
\begin{equation*}
\label{eq12082025D}
R_{12}=\bar{v}^2 a_{12}a_{22}^2+\bar{v}a_{22}^2a_{23}+a_{12}^3+a_{12}a_{22}a_{23}+a_{22}^2a_{13},
\end{equation*}
and that of the first and the third ones if and only if $R_{13}=0$ where
\begin{equation*}
\label{eq12082025DD}
R_{13}=\bar{v}^3 a_{22}^2a_{23}+\bar{v}^2a_{22}^2a_{13}+\bar{v}a_{12}^2a_{23}+a_{12}a_{22}a_{33}+a_{12}^2a_{13}.
\end{equation*}
By a straightforward computation,
$${\mbox{$Q_{12}=a_{22}R_{12}$ and $Q_{13}=a_{22}^2R_{13}+a_{12}^2R_{12}$}}.$$

It turns out that $\ell_X$ is a component of $\mathcal{H}$ if and only if so is $\ell_Q$ (and hence $\ell_V$). Therefore, $C$ is reducible if and only if $\mathcal{H}$ is reducible, and if $\mathcal{H}$ is reducible then it splits into three lines.
\end{proof}
\begin{proposition}
\label{pro12082025A} Assume that $a_{12}=0$ but $a_{22}\ne 0$. Then $C$ is degenerate if and only if $\mathcal{H}$ is reducible. Moreover, if $\mathcal{H}$ is reducible then it splits into three linear components.
\end{proposition}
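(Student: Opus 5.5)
Since $a_{12}=0$, we have $Q_\infty=(a_{22}:0:0)=V_\infty$, and $\bar v$ is determined by $\bar v^2=a_{11}/a_{22}$. Put
$$c=\bar v a_{23}+a_{13}.$$
With $a_{12}=0$, Equation (\ref{eq09082025A}) becomes
$$H(X,V)=cV^2+(a_{22}X^2+a_{23}X+a_{33})V+c(X+\bar v^2).$$
The plan is to show that both conditions in the statement are equivalent to $c=0$.

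\emph{Degeneracy of $C$.} Substitute $a_{12}=0$ and $a_{11}=a_{22}\bar v^2$ into (\ref{conde}). The left-hand side becomes
$$a_{22}\bar v^2a_{23}^2+a_{22}a_{13}^2=a_{22}(\bar va_{23}+a_{13})^2=a_{22}c^2,$$
using characteristic $2$. Since $a_{22}\neq 0$, $C$ is degenerate if and only if $c=0$.

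\emph{If $c=0$ then $\mathcal H$ splits into lines.} In that case $H=V(a_{22}X^2+a_{23}X+a_{33})$. Over $\mathbb{F}$ this factors as the line $V=0$ times two lines $X=x_1$ and $X=x_2$, where $x_1,x_2$ are the roots of $a_{22}Z^2+a_{23}Z+a_{33}$. These two lines coincide when $a_{23}=0$, since then the roots form a double root. So $\mathcal H$ splits into three linear components, counted with multiplicity.

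\emph{If $\mathcal H$ is reducible then $c=0$.}
\begin{itemize}
\item A reducible cubic has a linear component $\ell$.
\item The line at infinity is not a component, because the leading form of $H$ is $a_{22}VX^2\neq 0$.
\item So $\ell$ meets the line at infinity in a point of $\mathcal H$. As in the proof of Proposition \ref{pro12082025}, that point is $V_\infty$ or $X_\infty$.
\item If $V_\infty\in\ell$, then $\ell$ has equation $X=x$. For it to be a component, the coefficient of $V^2$ must vanish, giving $c=0$.
\item If $X_\infty\in\ell$, then $\ell$ has equation $V=v$. The coefficient of $X^2$ in (\ref{eq09082025}) is $a_{22}V$, so $a_{22}v=0$ and hence $v=0$. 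Then $H(X,0)=c(X+\bar v^2)$ must vanish identically, which again gives $c=0$.
\end{itemize}

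Combining the three parts gives both claims. The only step requiring care is ruling out a component through some other point at infinity. It is handled by the fact that, with $a_{12}=0$, the leading form $a_{22}VX^2$ has only the zeros $V_\infty$ and $X_\infty$ on the line at infinity.
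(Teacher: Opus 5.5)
Your proof is correct and follows essentially the paper's route. Both arguments examine linear components through $V_\infty$ and $X_\infty$, reduce each case to $\bar v a_{23}+a_{13}=0$, and match this against (\ref{conde}); your identity $a_{22}(\bar v a_{23}+a_{13})^2$ and your leading-form argument make explicit steps the paper leaves implicit. The one difference is that the paper invokes the standing non-triviality of $(a_{12},a_{13},a_{23})$, which forces $a_{23}\neq 0$ when $\bar v a_{23}+a_{13}=0$, so the lines $X=x_1$ and $X=x_2$ are actually distinct and your multiplicity caveat is unnecessary.
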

\begin{proof} Since $a_{12}=0$, $Q_\infty=V_\infty$.
Let $\ell$ be a line and suppose $\ell$ to be a linear component of $\mathcal{H}$. Then $\ell$ passes through one of the points $V_\infty$ and $X_\infty$.

(i)  If $V_\infty\in \ell$, let $\ell_V=\ell$, and  argue as in part (i) of the proof of Proposition \ref{pro12082025}.
Then $\ell_V$ is a component of $\mathcal{H}$ if and only if there exists a solution $\xi$ of the system in the indeterminate $Z$
\begin{equation*}
\label{eq09082025AAA}
\begin{cases}
\bar{v}a_{23}+a_{13}=0;\\
a_{22}Z^2+a_{23}Z+a_{33}=0;\\
(Z+\bar{v}^2)(\bar{v}a_{23}+a_{13})=0.
\end{cases}
\end{equation*}
Therefore $\ell_V$ is a component of $\mathcal{H}$ if and only if $\bar{v}a_{23}+a_{13}=0$. Actually, from (\ref{conde}) this may only occur when $C$ is degenerate as $a_{11}+\bar{v}^2a_{22}=0$. Moreover, since the triple $(a_{12},a_{13},a_{23})$ is assumed to be non-trivial, the polynomial $R(X)=a_{22}X^2+a_{23}X+a_{33}$ splits into two distinct linear factors. If $\xi_1$ and $\xi_2$ are the roots of $R(x)$, then the lines $X=\xi_1$ and $X=\xi_2$ are linear components of $\mathcal{H}$. Thus, $\mathcal{H}$ splits into three linear components where the third one is a line through $X_\infty$.

(ii) If $X_\infty\in \ell$, let $\ell_X=\ell$ and argue as in part (iii) of the proof of Proposition \ref{pro12082025}.  Then $\ell_X$ is a component of $\mathcal{H}$ if and only if there exists a solution $\nu$ of the system in the indeterminate $Z$
\begin{equation*}
\label{eq09082025CC}
\begin{cases}Za_{22}=0; \\
Za_{23}+\bar{v}a_{23}+a_{13}=0;\\
Z^2(\bar{v}a_{23}+a_{13})+Za_{33}+\bar{v}^3a_{23}+\bar{v}^2a_{13}=0.
\end{cases}
\end{equation*}
From the first equation, $\nu=0$. Then $\ell_X$ is a component of $\mathcal{H}$ if and only if $\bar{v}a_{23}+a_{13}=0$.
As in part (i), this may only occur when $C$ is degenerate. By (\ref{eq09082025A}),
the other two components of $\mathcal{H}$ come from the factorization of the above polynomial $R(X)$, and they are the lines $X=\xi_1$ and $X=\xi_2$.  
\end{proof}

\begin{proposition}
\label{pro12082025AA} Assume that $a_{22}=0$ but $a_{12}\ne 0$. Then $C$ is degenerate if and only if $\mathcal{H}$ is reducible. 
\end{proposition}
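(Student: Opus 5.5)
The plan is to mimic the proofs of Propositions \ref{pro12082025} and \ref{pro12082025A}, now with $a_{22}=0$, $a_{12}\ne 0$. In this case $Q_\infty=X_\infty$, and the points at infinity of $\mathcal{H}$ are $V_\infty$ and $X_\infty$. We have already shown that both are non-singular points of $\mathcal{H}$. Moreover, $a_{11}+a_{12}\bar v=0$, so $\bar v=a_{11}/a_{12}\in\mathbb{F}_q$.

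Since $\mathcal{H}$ is a cubic, any reducible $\mathcal{H}$ has a linear component $\ell$, and $\ell$ passes through $V_\infty$ or through $X_\infty$. We treat the two cases separately.

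\emph{Case $V_\infty\in\ell$.} Then $\ell$ has equation $X=\xi$. From (\ref{eq09082025A}), $\xi$ must satisfy
\[
a_{12}\xi+\bar v a_{23}+a_{13}=0
\quad\text{and}\quad
a_{23}\xi+a_{33}=0,
\]
the third coefficient then vanishing automatically. Eliminating $\xi$ gives $U_{12}=0$ with $a_{22}=0$, that is,
\[
\bar v a_{12}a_{23}^2+a_{12}^2a_{33}+a_{12}a_{13}a_{23}=0.
\]
Substituting $\bar v a_{12}=a_{11}$ turns this into (\ref{conde}) with $a_{22}=0$. Hence $\ell_V$ exists if and only if $C$ is degenerate. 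If $a_{23}=0$, then $a_{33}=0$ is forced, and a root $\xi$ still exists, so the equivalence remains valid.

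\emph{Case $X_\infty\in\ell$.} Then $\ell$ has equation $V=v$. Using (\ref{eq09082025}), the coefficient of $X^2$ is the constant $a_{12}$, which is nonzero. Hence no line $V=v$ can be a component, and this case does not occur.

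Conversely, if $C$ is degenerate, the first case produces a linear component $X=\xi$, so $\mathcal{H}$ is reducible. Combining both directions gives the claimed equivalence.

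The main obstacle is the degenerate subcase in which the elimination loses information, namely $a_{23}=0$. There I must check carefully, using the non-triviality of the triples in (\ref{eq21112025}), that the system still has a solution exactly when (\ref{conde}) holds. I also need to confirm that the vanishing of the $X^2$-coefficient really excludes components through $X_\infty$ when $X_\infty$ is a simple point of $\mathcal{H}$.
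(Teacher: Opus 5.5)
Your proof is correct and follows the paper's argument. A component $V=v$ through $X_\infty$ is excluded because the $X^2$-coefficient of $H$ reduces to the nonzero constant $a_{12}$. For a component $X=\xi$ through $V_\infty$, elimination together with $a_{12}\bar v=a_{11}$ turns the solvability condition into (\ref{conde}) with $a_{22}=0$.

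Your worry about the subcase $a_{23}=0$ is unnecessary. Since $a_{12}\ne 0$, the first equation always determines $\xi=(\bar v a_{23}+a_{13})/a_{12}$ uniquely, so the elimination is exact in every case.
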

\begin{proof} Since $a_{22}=0$, we have $Q_\infty=X_\infty$.
Let $\ell$ be a line and suppose $\ell$ to be a linear component of $\mathcal{H}$. Then $\ell$ passes through one of the points $V_\infty$ and $X_\infty$.

(i)  If $X_\infty\in \ell$, let $\ell_X=\ell$, and  argue as in part (iii) of the proof of Proposition \ref{pro12082025}.
Therefore, $\ell_X$ is a component of $\mathcal{H}$ if and only if $\nu$ is a solution of the system in the indeterminate $Z$
\begin{equation*}
\label{eq09082025AAAA}
\begin{cases}a_{12}=0; \\
Z^2a_{12}+Za_{23}+\bar{v}^2a_{12}+\bar{v}a_{23}+a_{13}=0;\\
Z^2(\bar{v}a_{23}+a_{13})+Za_{33}+\bar{v}^3a_{23}+\bar{v}^2a_{13}=0.
\end{cases}
\end{equation*}
From the first equation we obtain a contradiction. Thus this case can not occur.

(ii) If $V_\infty\in \ell$, let $\ell_V=\ell$ and argue as in part (i) of the proof of Proposition \ref{pro12082025}.  Therefore, $\ell_V$ is a component of $\mathcal{H}$ if and only if there exists a solution $\xi$ of the system in the indeterminate $Z$
\begin{equation*}
\label{eq09082025CCC}
\begin{cases}
a_{12}Z+\bar{v}a_{23}+a_{13}=0;\\
a_{23}Z+a_{33}=0;\\
(Z+\bar{v}^2)(a_{12}Z+\bar{v}a_{23}+a_{13})=0.
\end{cases}
\end{equation*}
The system of the first two equations has a solution if and only if $S_{12}=0$ where
\begin{equation} \label{S12}
    S_{12}=a_{12}a_{33}+a_{23}^2\bar{v}+a_{13}a_{23}=0.
\end{equation}
Since $a_{11}+a_{12}\bar{v}=0$, it turns out form (\ref{S12}) that the system of the first two equations has a solution if and only if
$$a_{11}a_{23}^2+a_{12}a_{23}a_{13}+a_{33}a_{12}^2=0,$$
that is, $V_\infty\in\ell_V$ if and only if the conic $C$ is reducible.
\end{proof}
We are in a position to prove the following claim.
\begin{theorem}
\label{th05112025}
Let $C$ be a non-degenerate conic of equation
$$a_{11}X^2+a_{12}XY+a_{22}Y^2+a_{13}X+a_{23}Y+a_{33}=0.$$
Then 
\begin{equation}
\label{eq07122025}
\textstyle{\frac{1}{2}}(q-2\sqrt{q}-2)\le
|\bar{\Delta}\cap C|\le \textstyle{\frac{1}{2}}(\sqrt{q}+1)^2
\end{equation}
apart from four exceptions with $a_{12}=a_{22}=0$ and $a_{23}=1$, namely 
$$|\bar{\Delta}\cap C|=
\begin{cases}
\mbox{$q$, \quad \,\,\,for $\mathfrak{Tr}(a_{11})=0$, and   $a_{13}=a_{33}=0$;}\\
\mbox{$q-1$, for $\mathfrak{Tr}(a_{11})=0,$ and $a_{33}=a_{13}^2\ne 0$};\\
\mbox{$1$,\quad \,\,\,  for $\mathfrak{Tr}(a_{11})=1$,   and $a_{33}=a_{13}=0$};\\
\mbox{$0$,\quad \,\,\, for $\mathfrak{Tr}(a_{11})=1$,   and $a_{33}=a_{13}^2, a_{33}\ne 0$};\\ 
\end{cases}
$$
and two more exception with $a_{12}=a_{22}=a_{23}=0$ and $a_{11}a_{13}a_{33}\ne 0$, namely
$$|\bar{\Delta}\cap C|=
\begin{cases}
{\mbox{$q$, for $\mathfrak{Tr}(a_{11}a_{33}/a_{13}^2)=0$}};\\
{\mbox{$0$, for $\mathfrak{Tr}(a_{11}a_{33}/a_{13}^2)=1$}}.
\end{cases}
$$
\end{theorem}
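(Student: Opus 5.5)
The plan is to split according to the pair $(a_{12},a_{22})$, since the earlier results already settle everything except the case governed by (\ref{eq17112025}).

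\emph{Case $a_{12}=a_{22}=0$.} Here either $a_{23}=0$ or, after scaling, $a_{23}=1$. If $a_{23}=0$, Proposition~\ref{pro02012026} applies and gives the last two exceptions, with values $q$ or $0$. If $a_{23}=1$, Theorem~\ref{pro19112025AA} lists all possible values. These are $\frac{1}{2}q-1$, $\frac{1}{2}q$ and $\frac{1}{2}q+1$, which lie inside (\ref{eq07122025}) because $\frac{1}{2}(q-2\sqrt{q}-2)\le \frac{1}{2}q-1$ and $\frac{1}{2}q+1\le\frac{1}{2}(q+2\sqrt{q}+1)$ for $q\ge4$. The remaining values $q,\,q-1,\,1,\,0$ are exactly the four listed exceptions.

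\emph{Case (\ref{eq17112025}) holds.} By Propositions~\ref{pro12082025}, \ref{pro12082025A} and~\ref{pro12082025AA}, non-degeneracy of $C$ forces the cubic $\mathcal{H}$ to be irreducible. Its points at infinity are $V_\infty$, $X_\infty$ and $Q_\infty$, and we showed they are non-singular. Hence $\mathcal{H}$ meets $\ell_\infty$ in two or three $\mathbb{F}_q$-rational non-singular points.

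If $\mathcal{H}$ is elliptic, the affine count from Subsection~\ref{av} gives $N_q(\mathcal{H})\in[q-2\sqrt q-2,\,q+2\sqrt q-1]$. If $\mathcal{H}$ is rational, the count lies in $[q-3,q]$, which is a subinterval of the elliptic one.

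Since $\omega_1$ preserves the number of points, $N_q(\mathcal{G})=N_q(\mathcal{H})$. Formulas (\ref{eq19112025})--(\ref{eq20112025A}) then give $N_q(\mathcal{F}^{(s)})=N_q(\mathcal{G}^{(s)})+\epsilon$ with $\epsilon\in\{-2,-1,0,1\}$. Finally Lemma~\ref{lem27102025} gives $|\bar\Delta\cap C|=\delta+\frac{1}{2}N_q(\mathcal{F}^{(s)})$ with $\delta\in\{0,1\}$.

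\emph{The main obstacle.} Naively combining these corrections overshoots (\ref{eq07122025}), so one must check case by case which $\epsilon$ occurs together with which $\delta$.

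\emph{Upper bound.} The danger is $\delta=1$. This happens for $a_{33}=0$, $a_{13}\ne0$, where $\epsilon\le -1$ by (\ref{eq20112025}). It also happens for $a_{33}=a_{13}=0$ with $a_{23}=0$ or $\mathfrak{Tr}(a_{11}/a_{23})=1$, where (\ref{eq20112025A}) again gives $\epsilon\le -1$. In either case
\[
|\bar\Delta\cap C|\le 1+\tfrac{1}{2}(q+2\sqrt q-2)=\tfrac{1}{2}(\sqrt q+1)^2 .
\]
When $\delta=0$ and $\epsilon\le 0$ the bound is immediate. The only way to get $\epsilon=+1$ is $a_{33}=a_{13}=0$, $a_{22}=0$, $a_{23}\ne 0$, $\mathfrak{Tr}(a_{11}/a_{23})=0$, and then $\delta=0$. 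This gives
\[
|\bar\Delta\cap C|\le \tfrac{1}{2}(q+2\sqrt q)<\tfrac{1}{2}(\sqrt q+1)^2 .
\]

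\emph{Lower bound.} We always have $\delta\ge0$. The extreme $\epsilon=-2$ arises only in the $\delta=0$ case $a_{33}\ne0$ of (\ref{eq19112025}). There I would use that $N_q(\mathcal{G})$ already counts the two points of $\mathcal{G}$ on $X=0$, so $N_q(\mathcal{F})\ge N_q(\mathcal{H})-2$.

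The gap is that plugging the worst case into the elliptic interval gives only $\frac{1}{2}(q-2\sqrt q-4)$, one short of the claimed bound. To close it I would refine the affine count. When $\mathcal{G}$ has two points on $X=0$, the three points at infinity of $\mathcal{H}$ together with the structure of $\omega_1$ should force at most two rational points at infinity to be lost, or equivalently give $N_q(\mathcal{H})\ge q-2\sqrt q$ in that subcase. The parity of $N_q(\mathcal F)$, which is even, also helps to round the bound to the stated value. This refinement is the step I expect to be most delicate.
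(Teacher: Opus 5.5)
\textbf{There is a genuine gap, and it cannot be closed: the stated lower bound is false in your bad subcase.} Your route is the paper's route. You split off $a_{12}=a_{22}=0$ via Proposition~\ref{pro02012026} and Theorem~\ref{pro19112025AA}. Otherwise you combine irreducibility of $\mathcal{H}$, the affine Hasse interval, $N_q(\mathcal{G})=N_q(\mathcal{H})$, the corrections (\ref{eq19112025})--(\ref{eq20112025A}) and Lemma~\ref{lem27102025}. The paper then simply asserts that (\ref{eq07122025}) ``follows''.

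Your upper-bound bookkeeping is right. There is one harmless slip: $1+\frac12(q+2\sqrt q-2)=\frac12(q+2\sqrt q)$, not $\frac12(\sqrt q+1)^2$, but this is still below the bound.

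The gap you isolate in the lower bound is real, and the paper's proof does not address it either. The bad subcase is $a_{33}\neq0$, $a_{22}a_{23}\neq0$, $\mathfrak{Tr}(a_{22}a_{33}/a_{23}^2)=0$. Take $q=8$ with $\omega^3=\omega+1$, so that $\mathfrak{Tr}(c_0+c_1\omega+c_2\omega^2)=c_0$, and
\[
C:\ \omega^3X^2+\omega XY+Y^2+\omega^3X+\omega^2Y+\omega^6=0 .
\]
The left side of (\ref{conde}) equals $\omega^3\neq0$, and all $a_{ij}\neq0$, so $C$ is non-degenerate and not one of the listed exceptions. Its seven affine points are $(0,1)$, $(0,\omega^6)$, $(\omega,\omega)$, $(\omega^4,\omega)$, $(\omega^4,1)$, $(\omega^5,\omega^2)$, $(\omega^5,\omega^6)$. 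For the five with $x\neq0$, $y/x^2\in\{\omega^6,1,\omega^3\}$, and all of these have trace $1$. Hence $|\bar{\Delta}\cap C|=0<\frac12(8-2\sqrt8-2)$. So neither parity nor a finer study of $\omega_1$ can give what you would need, namely $N_q(\mathcal{G})\ge q-2\sqrt q$ in this subcase: here $N_q(\mathcal{G})=2$.

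\textbf{The inherited step $N_q(\mathcal{G})=N_q(\mathcal{H})$ is also unreliable.} In the example $\bar v=1$. Then $\mathcal{H}$ has the three affine points $(1,0)$, $(\omega^4,0)$, $(\omega^6,\omega)$, while $\mathcal{G}$ has only $(0,1)$ and $(0,\omega^3)$. The point $((\bar v a_{23}+a_{13})/a_{12},0)$ of $\mathcal{H}$ on $V=0$ is not the image of any affine point of $\mathcal{G}$. Moreover, if $a_{12}a_{22}\neq0$ and $\mathfrak{Tr}(a_{11}a_{22}/a_{12}^2)=1$, then $\bar v\notin\mathbb{F}_q$, and $\mathcal{H}$ is not defined over $\mathbb{F}_q$ at all.

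A sound count works on the smooth model $E$ of $\mathcal{F}$, i.e.\ the Artin--Schreier cover $z^2+z=y/x^2$ of $C$. In the bad subcase, when $E$ has genus one,
\[
N(E)=2|\bar{\Delta}\cap C|+2+2c_\infty .
\]
The $2$ comes from the two ramified points of $C$ on $X=0$. Here $c_\infty\le2$ is the number of rational points at infinity of $C$, and these split because $y/x^2$ vanishes there. $N(E)$ is even by this formula and is at least $q+1-2\sqrt q$ by Hasse. For even $h$ this gives exactly your value $\frac12(q-2\sqrt q-4)$. For even $h\ge4$ that value is attained, by choosing $C$ with $c_\infty=2$ and $E$ of trace $2\sqrt q-1$.

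So the honest outcome of your argument is a lower bound that is one weaker than (\ref{eq07122025}), not a proof of the stated one.
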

\begin{proof}
 If $a_{12}=a_{22}=0, a_{23}=1$, the claim follows from Theorem \ref{pro19112025AA} and if $a_{12}=a_{22}=a_{23}=0$, the claim follows from Proposition \ref{pro02012026}. Otherwise, by Propositions \ref{pro12082025}, \ref{pro12082025A} and \ref{pro12082025AA}, $\mathcal{H}$ is irreducible, it has at least two points at infinity and they are non-singular. Therefore, results from the last part of Section \ref{av} apply. Since $N_q(\mathcal{G})=N_q(\mathcal{H})$,  (\ref{eq07122025})  follows from (\ref{eq19112025}), (\ref{eq20112025}), and (\ref{eq20112025A}).
 \end{proof}
The proof of Theorem \ref{th05112025} also shows the following result.
 \begin{corollary}
 \label{cor28112025}
 $$\textstyle{\frac{1}{2}}(q-2\sqrt{q}-2)\le |\Delta\cap C|\le
 \textstyle{\frac{1}{2}}(q+2\sqrt{q}-1)$$
 apart from two exceptions with $a_{12}=a_{22}=0, a_{23}=1$, namely
 $$|\Delta\cap C|=
 \begin{cases}
 q-1,\,\, \mbox{for $\mathfrak{Tr}(a_{11})=0$, and  $a_{13}=a_{33}=0$ or $a_{13}^2=a_{33}\ne 0$};\\
 0,\quad \,\,\,\,\,\, \mbox{for $\mathfrak{Tr}(a_{11})=1$, and  $a_{13}=a_{33}=0$ or $a_{13}^2=a_{33}\ne 0$};
 \end{cases} 
$$
and two more exceptions with $a_{12}=a_{22}=a_{23}=0$ and $a_{11}a_{13}a_{33}\ne 0$, namely
$$|\Delta\cap C|=
\begin{cases}
{\mbox{$q$, for $\mathfrak{Tr}(a_{11}a_{33}/a_{13}^2)=0$}};\\
{\mbox{$0$, for $\mathfrak{Tr}(a_{11}a_{33}/a_{13}^2)=1$}}.
\end{cases}
$$
\end{corollary}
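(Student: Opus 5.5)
The plan is to pass from $\bar{\Delta}$ to $\Delta=\bar{\Delta}\setminus\{O\}$ in each case of Theorem \ref{th05112025}. The only difference between $|\Delta\cap C|$ and $|\bar{\Delta}\cap C|$ is whether $O\in C$, that is, whether $a_{33}=0$. Hence
$$|\Delta\cap C|=|\bar{\Delta}\cap C|-\epsilon,\qquad \epsilon=\begin{cases}1 & a_{33}=0,\\ 0 & a_{33}\neq 0.\end{cases}$$
So the work consists of re-reading the proof of Theorem \ref{th05112025}, together with Lemma \ref{lem27102025} and the counting formulas (\ref{eq19112025})--(\ref{eq20112025A}), with this correction in mind.

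\textbf{The exceptional cases.} For $a_{12}=a_{22}=a_{23}=0$, Proposition \ref{pro02012026} already gives $a_{33}\neq0$, so $O\notin C$ and the values $q$ and $0$ carry over unchanged. For $a_{12}=a_{22}=0$, $a_{23}=1$ I use the list in Theorem \ref{pro19112025AA}.
\begin{itemize}
\item If $a_{13}=a_{33}=0$, then $O\in C$, so the values $q$ and $1$ become $q-1$ and $0$.
\item If $a_{33}=a_{13}^2\neq0$, then $O\notin C$, so the values $q-1$ and $0$ are unchanged.
\end{itemize}
This merges into the two stated exceptions. The remaining values in Theorem \ref{pro19112025AA} are $\frac12 q-1$, $\frac12 q$ and $\frac12 q+1$; after subtracting $\epsilon$ they lie in the claimed interval, since $q\ge4$.

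\textbf{The generic case.} Here Propositions \ref{pro12082025}, \ref{pro12082025A} and \ref{pro12082025AA} give that $\mathcal H$ is an irreducible cubic with at least two non-singular $\mathbb F_q$-points at infinity. By Section \ref{av}, its number of affine points lies in $[q-2\sqrt q-2,\,q+2\sqrt q-1]$. Since $N_q(\mathcal G)=N_q(\mathcal H)$, the next step is to apply (\ref{eq19112025})--(\ref{eq20112025A}) to get $N_q(\mathcal F^{(s)})$, and then Lemma \ref{lem27102025} to get $|\bar\Delta\cap C|$.

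\textbf{Key observation for the upper bound.} When $a_{33}\ne0$ there is no $+1$ term in Lemma \ref{lem27102025}, and (\ref{eq19112025}) only decreases the count. Hence
$$|\Delta\cap C|\le \tfrac12 N_q(\mathcal H)\le \tfrac12(q+2\sqrt q-1).$$
When $a_{33}=0$, the extra $+1$ from Lemma \ref{lem27102025} is exactly the point $O$, and it is removed when passing to $\Delta$. Thus $|\Delta\cap C|$ equals $\frac12 N_q(\mathcal F^{(s)})$ in all cases. Also $N_q(\mathcal F^{(s)})\le N_q(\mathcal G^{(s)})$, except in the second line of (\ref{eq20112025A}). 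So the upper bound $\frac12(q+2\sqrt q-1)$ follows directly.

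\textbf{The lower bound.} The count $N_q(\mathcal F^{(s)})$ may lose up to $2$ relative to $N_q(\mathcal G^{(s)})$. Then $|\Delta\cap C|\ge\frac12(q-2\sqrt q-4)$, which is slightly weaker than claimed. To recover the stated bound, I would use the fact that the points of $\mathcal G$ on $X=0$ correspond under $\omega_1$ to affine points of $\mathcal H$. So the affine count of $\mathcal H$ already includes them, and the bound $q-2\sqrt q-2$ should be applied to $N_q(\mathcal H)$ minus those points, using the interval for the affine part of $\mathcal H$ off that line.

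\textbf{Main obstacle.} The hardest step is this bookkeeping, together with the anomalous case in (\ref{eq20112025A}) where $N_q(\mathcal F^{(2)})=N_q(\mathcal G^{(2)})+1$. That case can only occur with $a_{22}=0$, $a_{33}=a_{13}=0$ and $a_{12}\neq0$. For it I would check directly that $\mathcal H$ then has fewer affine points (its point at $Q_\infty=X_\infty$ counts once), so the $+1$ is compensated and the upper bound still holds.
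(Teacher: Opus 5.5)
Your plan follows the paper's own route. The paper gives no separate proof; it says the corollary is what the proof of Theorem~\ref{th05112025} shows once $O$ is discarded. The weakness you flagged at the lower end is a genuine gap, and it cannot be closed, because the stated lower bound is false.

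\textbf{The lower bound.} Since $G(X,V)$ is the conic polynomial evaluated at $(X,V^2+XV)$, consider the map $(x,v)\mapsto(x,v^2+xv)$. It is $2$-to-$1$ onto $\Delta$ over $x\neq0$, and bijective from the line $x=0$ onto the line $X=0$. Hence
\[
N_q(\mathcal G)=2|\Delta\cap C|+e ,
\]
where $e\in\{0,1,2\}$ is the number of roots of $a_{22}Y^2+a_{23}Y+a_{33}$ in $\mathbb{F}_q$. So the loss of up to two points is real. Your repair has no content: there is no interval for the affine points of $\mathcal H$ off a line that is better than the one for all of them.

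Concretely, let $q=8$, $\mathbb{F}_8=\mathbb{F}_2(\theta)$ with $\theta^3=\theta+1$, and $C\colon XY+Y^2+(\theta+1)X+Y=0$.
\begin{itemize}
\item The left side of (\ref{conde}) is $\theta^2+\theta\neq0$, so $C$ is non-degenerate; since $a_{12}=1$, it is not exceptional.
\item Its affine points are $(0,0)$, $(0,1)$, and five points with $Y\in\{\theta,\theta^2,\theta^4,\theta^5,\theta^6\}$.
\item For those five points $Y/X^2\in\{1,\theta^3,\theta^5\}$, all of trace $1$.
\end{itemize}
Thus $\Delta\cap C=\emptyset$, while $\frac12(q-2\sqrt q-2)>0$.

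In this same example $N_q(\mathcal G)=2$ but $N_q(\mathcal H)=3$. With $\bar v=0$, the branch of $\mathcal G$ at $X_\infty$ tangent to $V=\bar v$ is sent to the affine point $(\theta^3,0)$ of $\mathcal H$. So the equality $N_q(\mathcal G)=N_q(\mathcal H)$ you import is also off by one.

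\textbf{The upper bound.} This end fails too. Your key step uses that $\mathcal H$ is a cubic over $\mathbb{F}_q$ with at least two $\mathbb{F}_q$-rational points at infinity. But $\omega_1$ and $H$ involve $\bar v$, a root of $a_{22}Z^2+a_{12}Z+a_{11}$. For $a_{12}\neq0$, this root lies in $\mathbb{F}_q$ only if $\mathfrak{Tr}(a_{11}a_{22}/a_{12}^2)=0$. Otherwise the branches of $\mathcal G$ at $X_\infty$ and at $Q_\infty$ are conjugate. Then $N_q(\mathcal G)$ is the full number of $\mathbb{F}_q$-points of the smooth model, and nothing caps it at $q+2\sqrt q-1$.

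For $q=4$, with $\mathbb{F}_4=\{0,1,\zeta,\zeta^2\}$, take the conic $X^2+XY+\zeta Y^2+\zeta^2X+\zeta^2Y+\zeta=0$. The left side of (\ref{conde}) equals $1$, $a_{12}=1$, and $a_{33}\neq0$, which is exactly your ``key observation'' case. It contains $(1,0),(\zeta,0),(1,1),(\zeta,\zeta^2)\in\Delta$, so $|\Delta\cap C|=4>\frac12(q+2\sqrt q-1)$.

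\textbf{A smaller slip.} The identity $|\Delta\cap C|=\frac12N_q(\mathcal F^{(s)})$ fails in the fourth case of Lemma~\ref{lem27102025}, where the value is $\frac12N_q(\mathcal F^{(2)})-1$. That, not a property of $\mathcal H$, is what absorbs the $+1$ in (\ref{eq20112025A}).

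So the corollary as stated cannot be proved by this or any route. Both constants must be weakened before an argument along these lines can succeed. The identity $N_q(\mathcal G)=2|\Delta\cap C|+e$, applied to the smooth model of $C(x_1+x_2,x_1x_2)=0$ in $\mathbb{P}^1\times\mathbb{P}^1$, is the natural place to start.
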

\begin{remark}
\label{rem17112025} \emph{The Waterhous theorem may be used to compute the exact value of $N_q(\mathcal{H})$ and hence the size of $\bar{\Delta}\cap C$. However, the computation to be carried out seems to be feasible only for smaller values of $q$.}
\end{remark}

\section{Evaluation codes of symmetric functions on distinguished points of the affine plane}
\label{ap} Let $\Sigma$ and $\Pi$ be two copies of $AG(2,\mathbb{F}_q)$, up to isomorphisms, and equip $\Sigma$ and $\Pi$ with affine coordinates $(X_1,X_2)$ and $(Y_1,Y_2)$, respectively. Let $\pi$ be the quadratic map from $\Sigma$ to $\Pi$ defined by $(X_1,X_2)\rightarrow (Y_1,Y_2)$ where $Y_1=X_1+X_2$ and $Y_2=X_1X_2$. The non-distinguished points of $\Sigma$ are mapped by $\pi$ into the points on the line $Y_1=0$.

 \begin{lemma}\label{image}
     The images of the distinguished points of $\Sigma$ by the map $\pi$ form the set $\Delta=\bar{\Delta}\setminus\{(0,0)\}$ in $\Pi$, where $\bar{\Delta}$ is defined by (\ref{delta}).
 \end{lemma}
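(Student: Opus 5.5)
We prove the two inclusions between $\pi(\mathcal{D})$, where $\mathcal{D}$ is the set of distinguished points of $\Sigma$, and $\Delta=\bar{\Delta}\setminus\{(0,0)\}$. The argument is the one already sketched for $\varphi_2$ at the beginning of Section \ref{dede}, now spelled out carefully. The key tool is the trace criterion: $Z^2+Z+v=0$ has two roots in $\mathbb{F}_q$ if and only if $\mathfrak{Tr}(v)=0$.

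\emph{Inclusion $\pi(\mathcal{D})\subseteq\Delta$.} Let $(x_1,x_2)\in\Sigma$ with $x_1\neq x_2$, and set $y_1=x_1+x_2$, $y_2=x_1x_2$. Since $q$ is even, $y_1\neq 0$. Now $x_1,x_2$ are the two roots of $X^2+y_1X+y_2$. Substituting $X=y_1Z$ turns this into $y_1^2(Z^2+Z+y_2/y_1^2)$, which therefore has the two roots $x_1/y_1,\,x_2/y_1\in\mathbb{F}_q$. By the trace criterion, $a:=y_2/y_1^2$ satisfies $\mathfrak{Tr}(a)=0$. Then $(y_1,y_2)=(y_1,ay_1^2)$ lies in $\bar{\Delta}$, and it is not the origin because $y_1\neq0$.

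\emph{Inclusion $\Delta\subseteq\pi(\mathcal{D})$.} Take $P\in\Delta$. First observe that $P=(y_1,ay_1^2)$ with $\mathfrak{Tr}(a)=0$ and $y_1\neq 0$. Indeed, if $y_1=0$ then $P=(0,0)$, which is excluded; this is the one spot where removing the origin matters. Since $\mathfrak{Tr}(a)=0$, we can write $a=t^2+t$ with $t\in\mathbb{F}_q$. Put $x_1=ty_1$ and $x_2=(t+1)y_1$. Then $x_1+x_2=y_1$ and $x_1x_2=(t^2+t)y_1^2=ay_1^2$, so $\pi(x_1,x_2)=P$. Finally, $x_1\neq x_2$ because their sum $y_1$ is nonzero, so $(x_1,x_2)$ is distinguished.

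\emph{Sanity check and main difficulty.} As a consistency check, $\pi$ is exactly two-to-one on $\mathcal{D}$, since $(x_1,x_2)$ and $(x_2,x_1)$ are the only preimages. Hence $|\pi(\mathcal{D})|=q(q-1)/2$, which equals $|\bar{\Delta}|-1$ as computed in Section \ref{dede}. The only delicate step is the bookkeeping at $y_1=0$. There the points of $\bar{\Delta}$ collapse to the origin, and the whole line $Y_1=0$ is the image of the diagonal $X_1=X_2$, which contains no distinguished points. The nonvanishing of $y_1$ settles this in both directions.
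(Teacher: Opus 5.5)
Your proof is correct and follows the paper's argument: the forward inclusion is exactly the paper's substitution $X=y_1Z$ combined with the trace criterion applied to $Z^2+Z+y_2/y_1^2$. The paper then factors $\mathfrak{Tr}(Z)=\prod_i(Z-a_i)$ to exhibit $\Delta$ as the line $Y_2=0$ together with the parabolas $Y_2=a_iY_1^2$, all taken off $Y_1=0$, and leaves the reverse inclusion implicit; your explicit preimage $(ty_1,(t+1)y_1)$ with $a=t^2+t$ settles that direction cleanly.
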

 \begin{proof}
Take a distinguished point $P=(x_1,x_2)$ in $\Sigma$. Let $\pi(P)=(y_1,y_2)$. Then $y_1\ne 0$. Since $y_1=x_1+x_2$ and $y_2=x_1x_2$, the roots of the polynomial $Z^2+y_1Z+y_2$ are $x_1$ and $x_2$. From this,  
     \begin{equation*}\label{eq}
             \left(\frac{x_2}{y_1}\right)^2+\frac{x_2}{y_1}+\frac{y_2}{y_1^2}=0.
         \end{equation*}
    As $x_2\in\mathbb{F}_q$, this yields $\mathfrak{Tr}(y_2/y_1^2)=0$. Let $a_0,\cdots,a_{\nicefrac{q}{2}-1}$ be the roots of the polynomial $\mathfrak{Tr}(Z)=Z^{\nicefrac{q}{2}}+\cdots+Z$ over $\mathbb{F}_q$. Then $\mathfrak{Tr}(Z)=(Z-a_0)\cdots(Z-a_{\nicefrac{q}{2}-1})$ whence
     \begin{equation}\label{tr}
         \mathfrak{Tr}\left(\frac{y_2}{y_1^2}\right)=\left(\frac{y_2}{y_1^2}-a_0\right)\cdots\left(\frac{y_2}{y_1^2}-a_{\nicefrac{q}{2}-1}\right)=\frac{(y_2-a_0y_1^2)\cdots(y_2-a_{\nicefrac{q}{2}-1}y_1^2)}{y_1^q}.
     \end{equation}
    Since $\mathfrak{Tr}(0)=0$, one of the coefficients $a_i$ vanishes, and we may assume $a_0=0$. Then (\ref{tr}) reads
    \begin{equation*}
        \frac{y_2(y_2-a_1y_1^2)\cdots(y_2-a_{\nicefrac{q}{2}-1}y_1^2)}{y_1^q}.
    \end{equation*}
It follows that the image of the distinguished points consists of the points which are on the line $Y_2=0$ or on the $\frac{1}{2}q-1$ parabolas of equation $Y_2=a_iY_1^2$ for $i=1,\cdots,\frac{1}{2}q-1$ but outside the line $Y_1=0$.
     \end{proof}

From the proof of Lemma \ref{image}, $\Delta$ comprises as many as $$(q-1)\left(\textstyle{\frac{1}{2}}q-1\right)+q-1=\textstyle{\frac{1}{2}}q(q-1)$$ points, in accordance to \cite{datta2023codes}.

 \subsection{Case of linear polynomials}
 \label{lp1}
Take the linear system $\Sigma_1=\Sigma_{2,1}$ over $\mathbb{K}$ consisting of all linear polynomials in $Y_1,Y_2$. According to Section \ref{smp},  $\Phi_2(\Sigma_1)$ is a linear system in the indeterminates $X_1,X_2$ such that $Y_1=X_1+X_2$ and $Y_2=X_1X_2$.
Evaluation of a symmetric polynomial $g\in \Phi_2(\Sigma_1)$ on the set of distinguished points in $\Sigma$ can be carried out by evaluating the corresponding polynomial $f$ with $g=\Phi_2(f)$ on the set $\Delta$ in $\Pi$. In particular, if $f$ has degree $1$, i.e. $f$ is represented by a line
$cY_2+mY_1+b=0$, then $g=\Phi_2(f)$ is represented by the hyperbole $cX_1X_2+m(X_1+X_2)+b=0$, and the weight $w(\ell)$ of the codeword in the Datta-Johnsen code $C_2'$ associated with $\ell$ is equal to $\frac{1}{2} q(q-1)-(\ell\cap \Delta)$. In Section \ref{linesec}, all possibilities for $\ell\cap \bar{\Delta}$ are determined. This gives the following results.
\begin{itemize}
    \item If $\ell$ has equation $Y_2=0$, then $|\ell\cap\bar{\Delta}|=q$. Thus, $|\ell\cap\Delta|=q-1$ by  $O\in\ell\cap\bar{\Delta}$ whence
    $$w(\ell)=\textstyle{\frac{1}{2}}q(q-1)-(q-1)=\textstyle{\frac{1}{2}}(q^2-3q+2)=\textstyle{\frac{1}{2}}(q-1)(q-2).$$
    \item If $\ell$ has equation $Y_2=mY_1+b, (m,b)\ne(0,0)$, then  $|\ell\cap\bar{\Delta}|=|\ell\cap\Delta|=\textstyle{\frac{1}{2}}(q-2)$ for $b\ne0$ while, if $b= 0, |\ell\cap\bar{\Delta}|=\textstyle{\frac{1}{2}}q-2$ and $|\ell\cap\Delta|=\textstyle{\frac{1}{2}}(q-2)$ by $O\in\ell\cap\bar{\Delta}$. Thus  $$w(\ell)=\textstyle{\frac{1}{2}}q(q-1)-\textstyle{\frac{1}{2}}(q-2)=\textstyle{\frac{1}{2}}(q^2-2q+2).$$
    \item If $\ell$ has equation $Y_1=b, b\ne0$, then  $|\ell\cap\Delta|=|\ell\cap\bar{\Delta}|=\textstyle{\frac{1}{2}}q$ whence
    $$w(\ell)=\textstyle{\frac{1}{2}}q(q-1)-\textstyle{\frac{1}{2}}q=\textstyle{\frac{1}{2}}q(q-2).$$
    \item If $\ell$ has equation $Y_1=0$, then $\ell\cap\bar{\Delta}=\{O\}$ and $\ell\cap\Delta=\emptyset$ whence
    $w(\ell)=\textstyle{\frac{1}{2}}q(q-1).$
\end{itemize}

Therefore, the weight distribution is $$\big\{\textstyle{\frac{1}{2}}(q-1)(q-2),\textstyle{\frac{1}{2}} q(q-2), \textstyle{\frac{1}{2}}(q^2-2q+2), \textstyle{\frac{1}{2}} q(q-1)\big\}.$$  Thus the reduced Datta-Johnsen code $C_2'$ is a $\left[\frac{1}{2} q(q-1),3,D\right]_q$ code which has minimum distance equal to $D=\frac{1}{2}(q-1)(q-2)$. Therefore, the Datta-Johnsen code has minimum distance $(q-1)(q-2)$ in accordance with \cite[Proposition 3.2]{datta2023codes} for the case $m=2$.

\subsection{Case of quadratic polynomials}\label{exa}
Let $\Sigma_2=\Sigma_{2,2}$ be the six-dimensional linear system over $\mathbb{K}$ consisting of all polynomials in $Y_1,Y_2$ of degree $\le 2$. The arising code has size $n=\frac{1}{2}q(q-1)$ and dimension $6$ whose weight distribution depends on the  possible intersections between $\Delta$ and a conic $\mathcal{D}$ in the affine plane $AG(2,\mathbb{F}_q)$. From Corollary \ref{cor28112025} the following result follows. 

\begin{theorem}
\label{th18032025} The reduced generalized Datta-Johnsen code $C_2(6)'$ arising from the linear system of all conics is  $\left[\frac{1}{2} q(q-1),6,\frac{1}{2}q(q-3)\right]_q$.
\end{theorem}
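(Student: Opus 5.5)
We need to show three things: the length is $n=\frac12 q(q-1)$, the dimension is $6$, and the minimum distance is $\frac12 q(q-3)$. The length is immediate from Lemma \ref{image}, which identifies the evaluation set with $\Delta$, of size $\frac12 q(q-1)$. A codeword comes from a nonzero $f\in\Sigma_2$. Its weight is $n-|\Delta\cap\mathcal D|$, where $\mathcal D$ is the curve $f=0$, except when $f$ is a nonzero constant, in which case the weight is $n$. So the whole proof reduces to bounding $|\Delta\cap\mathcal D|$ from above over all curves of degree at most two, and showing the bound is attained.

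For the dimension, we show the evaluation map is injective on $\Sigma_2$. If $f\ne 0$ vanished on all of $\Delta$, then $|\Delta\cap\mathcal D|=n=\frac12 q(q-1)$. The case analysis below shows $|\Delta\cap\mathcal D|\le q\le \frac12 q(q-3)<n$ for $q\ge 8$. For $q=4$ we use the sharper counts available in each case (for instance $2q-2$ for a line pair), and we check this small case directly if needed. Hence the dimension is $6$.

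For the minimum distance we split according to the type of $\mathcal D$.

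\emph{(a) $\mathcal D$ is non-degenerate.} Corollary \ref{cor28112025} gives $|\Delta\cap\mathcal D|\le \max\{q,\frac12(q+2\sqrt q-1)\}=q$ for $q\ge 4$.

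\emph{(b) $\mathcal D$ is a line.} Section \ref{lp1} gives $|\Delta\cap\ell|\le q-1$.

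\emph{(c) $\mathcal D$ is a pair of lines, possibly conjugate over $\mathbb{F}_{q^2}$ or coincident.} Conjugate lines share at most one rational point, and a double line reduces to case (b). For two $\mathbb{F}_q$-lines, the values from Section \ref{linesec} for $|\ell\cap\Delta|$ are $q-1$, $\frac12(q-2)$, $\frac12 q$ and $0$. The largest possible union is therefore $(q-1)+\frac12 q$, attained by $Y_2=0$ together with a vertical line $Y_1=b$, $b\ne0$. These two lines meet at $(b,0)\in\Delta$, so the union has $\frac32 q-2$ points. Other combinations, for example $Y_2=0$ with $Y_2=mY_1+b$, give at most $(q-1)+\frac12(q-2)$, which is smaller or equal after subtracting the common point. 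Two vertical lines give $q$, and two non-vertical lines give at most $q-2$. So $|\Delta\cap\mathcal D|\le\frac32 q-2$, i.e. the weight is at least $\frac12 q(q-1)-\frac32 q+2=\frac12(q^2-4q+4)$.

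This is worse than the claimed $\frac12 q(q-3)=\frac12(q^2-3q)$, which corresponds to $|\Delta\cap\mathcal D|=q$. So before concluding I must recheck the line-pair count. If it is correct, the true minimum distance is $\frac12(q-2)^2$ and the statement must be read with that correction, or the authors exclude degenerate conics. I would first verify whether $(b,0)$ really lies in $\Delta$: since $\mathfrak{Tr}(0)=0$, the point $(b,0)=(x,0\cdot x^2)$ with $x=b\ne0$ is indeed in $\Delta$. Rechecking this union count is the main obstacle.

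Attainment of the bound $q$ comes from the exceptional conic $a_{12}=a_{22}=a_{23}=0$ with $\mathfrak{Tr}(a_{11}a_{33}/a_{13}^2)=0$ in Corollary \ref{cor28112025}, which has $|\Delta\cap C|=q$ and hence weight $\frac12 q(q-3)$. The minimum distance is then the minimum of this value and the degenerate-case values, which is where the recheck of case (c) is decisive.
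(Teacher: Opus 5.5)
Your suspicion in case (c) is correct, and it means the statement as written cannot be proved: the claimed minimum distance $\frac12 q(q-3)$ is false. The paper's own proof only cites Corollary~\ref{cor28112025}. Its largest value $q$ is attained by $a_{11}X^2+a_{13}X+a_{33}=0$, which is itself a degenerate pair of vertical lines. The paper never examines the other reducible members of $\Sigma_2$, even though $\Sigma_2$ is a vector space and so contains every product of two linear polynomials. Your computation is right. We have $(b,0)\in\Delta$, and $\Phi_2(Y_2(Y_1+b))=X_1X_2(X_1+X_2+b)$ vanishes on the $q-1$ pairs containing $0$ and on the $\frac12 q$ pairs with sum $b$; the two sets share only $\{0,b\}$. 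So for $q\ge 8$ there is a codeword of weight $\frac12(q-2)^2<\frac12 q(q-3)$.

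However, your fallback value $\frac12(q-2)^2$ is not the minimum distance either. You imported the line counts of Section~\ref{linesec}, and these are wrong for the lines $Y_2=mY_1+m^2$. Their $\varphi_2$-preimage is not an irreducible hyperbola but the line pair $(X_1+m)(X_2+m)=0$. So each of these $q$ lines meets $\Delta$ in $q-1$ points (the pairs containing $m$), not $\frac12(q-2)$, and your bounds for line pairs involving non-vertical lines fail. For $m_1\ne m_2$ two such lines meet at $\varphi_2(m_1,m_2)\in\Delta$. Hence $\Phi_2((Y_2+m_1Y_1+m_1^2)(Y_2+m_2Y_1+m_2^2))=(X_1+m_1)(X_2+m_1)(X_1+m_2)(X_2+m_2)$ vanishes at $2q-3$ points of $\Delta$. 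It gives a codeword of weight $\binom{q-2}{2}=\frac12(q-2)(q-3)$, which is below both values for every $q\ge 4$. For $q=4$ the code is all of $\mathbb{F}_4^6$ and $d=1$, not $2$.

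The maximum is in fact $2q-3$. Take a symmetric $g$ of bidegree at most $(2,2)$ and first split off its factors in one variable. Otherwise $g$ is either a product of two irreducible $(1,1)$-curves, giving at most $q$ pairs, or irreducible of arithmetic genus at most $1$, giving at most $\frac12(q+1+2\sqrt q)$ pairs. So the correct parameters are $\left[\frac12 q(q-1),6,\frac12(q-2)(q-3)\right]_q$.

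This also fixes your dimension step. Your bound $|\Delta\cap\mathcal D|\le q$ contradicts your own case (c). Injectivity instead follows from $2q-3<\frac12 q(q-1)$, which holds for all $q\ge 4$, including $q=4$.
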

The linear system $\Sigma_2$ has linear subsystems $\Sigma_{2,2}(r)=\Sigma_2(r)$ for any degree $r$ for $1\le r \le 5$. Each of them gives rise to a generalized Datta-Johnsen code of size $\frac{1}{2}q(q-1)$ and dimension $r$ whose weight distribution and minimum distance depend on the intersections between the conics in $\Sigma_2(r)$ and $\Delta$. The possiblities for the sizes of such intersections are treated in Section \ref{conicsec}.
Here we work out two cases.

\subsubsection{Construction 1} In the first case, the linear system $\Sigma_2(3)$ of dimension $3$ will contain no reducible conic whose components are lines defined over $\mathbb{F}_q$. Actually, there exists just one such linear series;
see \cite{AL}. A useful description for our purpose is the odd characteristic version of the first example in \cite[Section 3.2]{GKNPS}. Look at the affine plane $AG(2,\mathbb{F}_{q^3})$ and its projective closure $PG(2,\mathbb{F}_{q^3})$ with homogeneous coordinates $(X:Y:Z)$. The projective  group $G=PGL(3,q)$ of $PG(2,\mathbb{F}_q)$ can be viewed as a subgroup of $PGL(3,q^3)$. The action of $G$ on $PG(2,\mathbb{F}_{q^3})$ produces three point-orbits, namely $PG(2,\mathbb{F}_q)$, the set of all points covered by lines of $PG(2,\mathbb{F}_q)$ and the set $\Lambda$ of the remaining
points. Here
\begin{equation*}
\label{eqA080125}
|\Lambda|=q^6+q^3+1-(q^2+q+1)-(q^2+q+1)(q^3-q)=q^6-q^5-q^4+q^3.
\end{equation*}
Choose a point $P=(a:b:c)\in\Lambda$ together with its Frobenius images $P_1=(a^q:b^q:c^q)$ and $P_2=(a^{q^2}:b^{q^2}:c^{q^2})$. They are the vertices of the triangle $PP_1P_2$ whose sides $\ell_1=PP_1$, $\ell_2=P_1P_2$ and $\ell_3=P_2P$ are disjoint from $PG(2,\mathbb{F}_q)$. If $\ell_i$ has equation $\ell_i(X,Y,Z)=a_iX+b_iY+c_iZ=0$ for $i=1,2,3$ then
$a_2=a_1^q,a_3=a_2^q,b_2=b_1^q,b_3=b_2^q,c_2=c_1^q,c_3=c_2^q$. This shows that the Frobenius image of $\ell_i$ is $\ell_{i+1}$ where the indices are taken$\pmod{3}$.

Let $\Sigma_2(3)$ be the net (linear system of projective dimension $2$) of $PG(2,\mathbb{F}_{q^3})$ comprising all conics through those points $P,P_1,P_2$. Clearly, $\Sigma_2(3)$ is generated by three reducible conics, namely those of equations $\ell_1(X,Y,Z)\ell_2(X,Y,Z)=0$,  $\ell_2(X,Y,Z)\ell_3(X,Y,Z)=0$ and $\ell_3(X,Y,Z)\ell_1(X,Y,Z)=0$, respectively, but   $\Sigma_2(3)$ also contains some further reducible conics of equations $\ell t=0$ where $\ell$ coincides with a side of the triangle, and $t$ is any line through the opposite vertex of $\ell$. The line $t$ may have at most one point of $PG(2,\mathbb{F}_q)$. Let $\bar{\Sigma}_2(3)$ be the set of all conics $\mathcal{C}_\lambda$ in $\Sigma_2(3)$ of equation $$\lambda\ell_1(x,y,z)\ell_2(x,y,z)+\lambda^q\ell_2(x,y,z)\ell_3(x,y,z)+\lambda^{q^2}\ell_3(x,y,z)\ell_1(x,y,z)=0$$ with $\lambda\in \mathbb{F}_{q^3}\setminus \{0\}$. The Frobenius image of $\mathcal{C}_\lambda$ is the conic of equation
$$\lambda^q\ell_2(x,y,z)\ell_3(x,y,z)+\lambda^{q^2}\ell_3(x,y,z)\ell_1(x,y,z)+\lambda^{q^3}\ell_1(x,y,z)\ell_2(x,y,z)=0.$$ Since $\lambda\in \mathbb{F}_{q^3}$, we have $\lambda^{q^3}=\lambda$, and hence $\bar{\Sigma}_2(3)$
consists of conics defined over $\mathbb{F}_q$, i.e. conics of $PG(2,\mathbb{F}_q)$. The total number of conics in $\bar{\Sigma}_2(3)$ equals $(q^3-1)/(q-1)=q^2+q+1$, i.e. the number of points of $PG(2,\mathbb{F}_q)$. It turns out that $\bar{\Sigma}_2(3)$ is a linear system of $PG(2,\mathbb{F}_q)$ of projective dimension $2$. Moreover, $\bar{\Sigma}_2(3)$ contains no reducible conic. This depends on the fact that any reducible conic in  $\Sigma_2(3)$ contains exactly one side of the triangle $PP_1P_2$ and hence it is not defined over $\mathbb{F}_q$ as the Frobenius map does not preserve any side of that triangle. Since $\bar{\Sigma}_2(3)$ has projective dimension $2$, there exists at least one conic $C\in \bar{\Sigma}_2(3)$ passing through the point $Y_\infty=(0:1:0)$ and tangent to the line $\ell_\infty$ of equation $Z=0$. In other words $C$ has homogeneous equation
\begin{equation}
\label{eq27112025}
U(X,Y,Z)=a_{11}X^2+a_{13}XZ+a_{23}YZ+a_{33}Z^2=0.
\end{equation}
In $\bar{\Sigma}_2(3)$, $C$ is the unique conic with equation of type (\ref{eq27112025}). In fact, if $V(X,Y,Z)=b_{11}X^2+b_{13}XZ+b_{23}YZ+b_{33}Z^2=0$ was the equation of another conic $D$ in $\bar{\Sigma}_2(3)$, then a $W(X,Y,Z)=b_{11}U(X,Y,Z)+a_{11}V(X,Y,Z)$ would be divisible by $Z$, and hence the conic of equation $W(X,Y,Z)=0$ would be a reducible conic in $\bar{\Sigma}_2(3)$.  Moreover, if $a_{11}$ in (\ref{eq27112025}) has zero trace, then we need to apply the change of coordinates $(X:Y:Z)\rightarrow (X':Y':Z')$ with $X'=\alpha X, Y'=Y, Z'=Z$ where $\mathfrak{Tr}(\alpha)=1$. In the new reference system, $C$ has equation of type (\ref{eq27112025}) with $\mathfrak{Tr}(a_{11})=1$.

Now, from Theorem \ref{pro19112025AA}, and Corollary  \ref{cor28112025}, the largest weight of the reduced generalized Datta-Johnson code $C_2(3)'$  does not exceed
$\frac{1}{2}(q+2\sqrt{q}-1)$. Therefore, it is a $\left[\frac{1}{2} q(q-1),3,D\right]_q$ linear code whose minimum distance $D$ is at least $\frac{1}{2} q(q-1)-\frac{1}{2}(q+2\sqrt{q}-1)=\frac{1}{2}\left(q^2-2q-2\sqrt{q}+1\right)$. Thus, the generalized Datta-Johnsen code $C_2(3)$ has minimum distance not smaller than $q^2-2q-2\sqrt{q}+1$. This shows Theorem \ref{main271224}.

\subsubsection{Construction 2}
In $AG(2,\mathbb{F}_q)$ with affine coordinates $(X,Y)$, let $\bar{\Sigma}_2(4)$ be the linear system of dimension $4$ consisting of all parabolas, that is, conics $C$ of equation
\begin{equation*}
   a_{11}X^2+a_{13}X+a_{23}Y+a_{33}=0.
\end{equation*}

Suppose first $a_{23}=0$. The following cases arise: $a_{11}=0$ and $C$ coincides with the vertical line $a_{13}X+a_{33}=0$;  $a_{11}\ne 0,a_{13}=0$ and $C$ coincides with the line $X+\sqrt{a_{33}/a_{11}}=0$;  or $a_{11}\ne 0,a_{13}\ne 0$, and $C$ splits into two vertical lines which are defined over $\mathbb{F}_q$, or $\mathbb{F}_{q^2}$ according as $\mathfrak{Tr}(a_{11}a_{33}/a_{13}^2)$ is equals $0$ or $1$. This together with Section \ref{lp1} show that if $a_{23}= 0$, then
$$ |\Delta\cap C|=
\begin{cases}
\textstyle{\frac{1}{2}}q,\,\,\quad {\mbox{for $a_{11}=0,a_{33}\ne 0$}};\\
0,\,\,\quad\,\,\,\, {\mbox{for $a_{11}=0,a_{33}=0$}};\\
\textstyle{\frac{1}{2}}q,\,\quad\, {\mbox{for $a_{11}\ne 0$, $a_{13}=0,a_{33}\ne 0$ ; }} \\
0,\,\,\quad\,\,\, {\mbox{for $a_{11}\ne 0$, $a_{13}=0,a_{33}=0$; }} \\
q,\,\,\quad\,\,\,\,{\mbox{for $a_{11}\ne 0, a_{13}\ne 0, a_{33}\ne 0$,  $\mathfrak{Tr}(a_{11}a_{33}/a_{13}^2)=0$; }} \\
\textstyle{\frac{1}{2}}q,\,\,\quad\,{\mbox{for $a_{11}\ne 0, a_{13}\ne 0, a_{33}=0$; }} \\
0,\,\,\quad\,\,\,\,\,{\mbox{for $a_{11}\ne 0, a_{13}\ne 0, a_{33}\ne0$,  $\mathfrak{Tr}(a_{11}a_{33}/a_{13}^2)=1$. }} \\
\end{cases}
$$

Suppose $a_{23}\ne 0$ and let $a_{23}=1$. From Theorem \ref{pro12082025AA}
$$ |\Delta\cap C|=
\begin{cases}
\textstyle{\frac{1}{2}}q-1,\,\,{\mbox{for $\mathfrak{Tr}(a_{11})=0$, $a_{33}\ne 0$  and $a_{33}\ne a_{13}^2$; }} \\
\textstyle{\frac{1}{2}}q,\,\,\,\,\,\,\,\,\,\,\,\, {\mbox{for $\mathfrak{Tr}(a_{11})=1$, $a_{33}\ne 0$  and $a_{33}\ne a_{13}^2$; }} \\
\textstyle{\frac{1}{2}}q-1,\,\,{\mbox{for $\mathfrak{Tr}(a_{11})=a_{33}=0$ and $a_{33}\ne a_{13}^2$; }} \\
\textstyle{\frac{1}{2}}q,\,\,\,\,\,\,\,\,\,\,\,\,{\mbox{for $\mathfrak{Tr}(a_{11})=1$, $a_{33}= 0$  and $a_{33}\ne a_{13}^2$; }} \\
q-1,\,\,\,\,\,\,{\mbox{for $\mathfrak{Tr}(a_{11})=0$,   and $a_{33}=a_{13}=0$;}} \\
0,\,\,\quad\,\,\,\,\,\,\,\,{\mbox{for $\mathfrak{Tr}(a_{11})=1$,   and $a_{33}=a_{13}=0$;}} \\
q-1,\,\,\,\,\,\,{\mbox{for $\mathfrak{Tr}(a_{11})=0$,   and $a_{33}=a_{13}^2, a_{33}\ne 0$;}} \\
0,\,\,\quad\,\,\,\,\,\,\,\,{\mbox{for $\mathfrak{Tr}(a_{11})=1$,   and $a_{33}=a_{13}^2, a_{33}\ne 0$.}}
\end{cases}
$$
Therefore, the linear system $\bar{\Sigma}_2(4)$ gives rise to a $\left[\frac{1}{2} q(q-1),4,D\right]_q$- code, whose minimum distance $D$ is $\frac{1}{2}q(q-3)$ and weight distribution is
$$\left\{\textstyle{\frac{1}{2}}q(q-3),\,\textstyle{\frac{1}{2}}(q^2-3q+2),\, \textstyle{\frac{1}{2}}q^2-q,\,\textstyle{\frac{1}{2}}q^2-q+1,\, \textstyle{\frac{1}{2}}q(q-1)\right\}.$$
This also shows that the generalized Datta-Johnsen code has minimum distance $q(q-3)$.

\section{Acknowledgments}
B. Gatti, G. Korchm\'aros and G. Schulte  have been partially supported by the Italian National Group for Algebraic and Geometric Structures and their Applications (GNSAGA - INdAM).

%%===========================================================================================%%
%% If you are submitting to one of the Nature Portfolio journals, using the eJP submission   %%
%% system, please include the references within the manuscript file itself. You may do this  %%
%% by copying the reference list from your .bbl file, paste it into the main manuscript .tex %%
%% file, and delete the associated \verb+\bibliography+ commands.                            %%
%%===========================================================================================%%

\bibliography{sn-bibliography}% common bib file
%% if required, the content of .bbl file can be included here once bbl is generated
%\input sn-article.bbl

\end{document}